\setlist[enumerate]{leftmargin=.5in}
\setlist[itemize]{leftmargin=.5in}
\newcommand{\dds}[1]{\frac{d #1}{d s}}
\newcommand{\dd}[2]{\frac{d #1}{d #2}}
\newcommand{\ip}[1]{\left\langle #1 \right\rangle} 
\newcommand{\avg}[1]{ #1 _{avg}} 
\newcommand{\R}{\mathbb{R}}
\newcommand{\mus}{{m_{us}}}
\newcommand{\integrate}{\int_{0}^{T}}
\newcommand{\summation}{\sum_{i=0}^{N-1}}
\newcommand{\dtaut}{D^t_\tau}
\newcommand{\dtot}{D^{t_2}_{t_1}} 
\newcommand{\adttau}{\overline{D}^\tau_t}
\newcommand{\ad}{\overline{D}}
\newcommand{\adto}{\overline{D}^{t_1}_{t_2}} 
\newcommand{\ap}{\overline{P}}
\newcommand{\al}{\overline{L}}
\newcommand{\az}{\overline{\zeta}}
\newcommand{\ac}{\overline{C}}
\newcommand{\av}{\overline{v}}
\newcommand{\aw}{\overline{w}}
\newcommand{\ay}{\overline{y}}
\crefname{hypothesis}{Hypothesis}{Hypotheses}
\title{Adjoint shadowing directions in hyperbolic systems for sensitivity analysis}
\author{Angxiu Ni\thanks{Department of Mathematics, University of California, Berkeley 
(\email{niangxiu@math.berkeley.edu}, \url{https://math.berkeley.edu/\~niangxiu/}).} }
\begin{document}
\maketitle

\begin{abstract}
  For hyperbolic diffeomorphisms, we define adjoint shadowing directions as a bounded inhomogeneous adjoint solution
  whose initial condition has zero component in the unstable adjoint direction.
  For hyperbolic flows, we define adjoint shadowing directions similarly, 
  with the additional requirement that the average of its inner-product with the trajectory direction is zero.
  In both cases, we show unique existence of adjoint shadowing directions, and how they can be used for adjoint sensitivity analysis.
  Our work set a theoretical foundation for efficient adjoint sensitivity methods for long-time-averaged objectives such as NILSAS.
\end{abstract}

\begin{keywords}
  adjoint, sensitivity analysis, linear response, dynamical systems, chaos, uniform hyperbolicity, ergodicity, shadowing lemma
\end{keywords}

\begin{AMS}
  34A34, 34D30, 37A99, 37D20, 37D45, 37N99, 46N99, 65P99
\end{AMS}

\section{Introduction}

Sensitivity analysis helps scientists and engineers design products \cite{Jameson1988,Reuther},
control processes and systems \cite{Bewley2001}, solve inverse problems \cite{Tromp}, 
estimate simulation errors \cite{Becker2001,Fidkowski,Persson2009_mesh_adapt}, 
assimilate measurement data \cite{Thepaut1991,COURTIER1993} 
and quantify uncertainties \cite{Marzouk2015}.
However, when the dynamical system is chaotic and the objective we are interested is a long-time-averaged quantity, 
conventional tangent or adjoint methods fail to provide useful sensitivity information.
One explanation of this failure is that trajectories of chaotic systems are highly sensitive to perturbations,
which property is typically mathematically modeled as the system being hyperbolic.

One approach to overcome the aforementioned difficulty is to move from investigating perturbations on trajectories
to perturbation of equilibrium distributions such as the SRB measures defined by Sinai, Ruelle, and Bowen \cite{young2002srb}.
This approach start systems from the same initial distribution and investigate its evolution after perturbing the parameters:
the new limit distribution yields sensitivity of averaged objectives we are interested in.
Such idea is reflected in Ruelle's linear response formula \cite{Ruelle_diff_maps,Ruelle_diff_maps_erratum,Ruelle_diff_flow},
and his fluctuation dissipation theorem \cite{Ruelle_newFDT}.
Ruelle's results were implemented by Lea \cite{Lea2000,eyink2004ruelle},
Abramov and Majda \cite{abramov2007blended,Abramov2008},
and Lucarini and others \cite{lucarini_linear_response_climate,lucarini_linear_response_climate2}.

Another approach is to keep analyzing perturbations in trajectories, but no longer insist on using the same initial conditions.
Instead, we look for a shadowing trajectory with perturbed parameters but still lies close to the base trajectory.
The existence of shadowing trajectories was first proved by Bowen \cite{Bowen_shadowing}, 
and Pilyugin \cite{Pilyugin_shadow_linear_formula} gave a formula of the first order difference between the shadowing trajectory and the base trajectory:
in this paper we call such first order difference the shadowing direction.
Wang developed the Least Squares Shadowing (LSS) method, \cite{wang2014convergence} 
where shadowing directions of hyperbolic diffeomorphisms are computed through a minimization of $L^2$ norms of inhomogeneous tangent solutions,
and the sensitivity is subsequently obtained.
For hyperbolic flows, a time dilation term was added
to reflect the speed difference between shadowing and the base trajectories \cite{Wang_ODE_LSS,Chater_convergence_LSS}.

Recently, the Non-Intrusive Least Squares Shadowing method (NILSS) developed by the author et al. \cite{Ni_NILSS_AIAA_2016,Ni_NILSS_JCP} 
finds a new formulation which allows constraining the minimization problem in LSS to the unstable subspace.
For many real-life problems, the unstable subspace has much lower dimension than the phase space, 
and NILSS can be thousands times faster than LSS. 
The Finite Difference NILSS (FD-NILSS) algorithm \cite{Ni_fdNILSS} can be implemented with only primal solvers, and does not require tangent solvers.
FD-NILSS has been applied to sensitivity analysis of several complicated flow problems 
\cite{Ni_CLV_cylinder,Ni_fdNILSS} which were too expensive for previous sensitivity analysis methods.

The marginal cost for a new parameter in NILSS is computing one extra inhomogeneous tangent solution.
Hence an adjoint algorithm is desired for cases where there are many parameters and only a few variables,
since cost of adjoint algorithms are not affected by the number of parameters.
The author et al. proposed an adjoint version in the first publication of NILSS \cite{Ni_NILSS_AIAA_2016};
however, this version is only correct for diffeomorphisms, 
for flows it lacks the constraint on the neutral adjoint CLV, which will be explained in our current paper.
Blonigan \cite{Blonigan_2017_adjoint_NILSS} developed a discrete adjoint version of NILSS, 
which requires both adjoint and tangent solvers: this can be a burden for programming \cite{Chandramoorthy2018_nilss_ad}.
To develop an shadowing-based adjoint sensitivity algorithm which does not require tangent solvers,
we should derive an analytic adjoint shadowing direction whose definition only depends on adjoint solutions:
this is our goal in this paper.

We organize the rest of this paper as follows.
We start by defining the adjoint shadowing direction and stating its unique existence theorem for both hyperbolic flows and diffeomorphisms;
then we review some properties of tangent and adjoint flows accompanying hyperbolic flows;
then we derive a formula which can be used in adjoint sensitivity analysis;
then we show this formula is in fact the adjoint shadowing direction for hyperbolic flows, and we prove its uniqueness.
Finally, we discuss adjoint shadowing direction for hyperbolic diffeomorphisms, which is easier than flows due to the absence of neutral subspace.
The appendices prove several properties of tangent and adjoint flows and how the two flows relate.
In another paper \cite{Ni_nilsas}, we develop the Non-Intrusive Least Squares Adjoint Shadowing (NILSAS) algorithm,
which is an efficient algorithm computing adjoint shadowing directions and performing adjoint sensitivity analysis.

\section{Statement of main theorems}\label{s:main thm}

In this section we first state the definition and the main theorem of the adjoint shadowing direction for hyperbolic flows.
Then we state the definition and the main theorem for hyperbolic diffeomorphisms.

\subsection{Adjoint shadowing in hyperbolic flows}

The governing equation for a uniform hyperbolic flow is: 
\begin{equation} \label{e:primal_system}
  \dd{u}{t} = f(u,s), \quad u(t=0) = u_0\,.
\end{equation}
We call a solution $u(t)$ a trajectory.
Here $f(u,s):\R^m\times \R\rightarrow\R^m$ is a smooth function, $u\in \R^m$ is the state, $u_0$ the initial condition, and $s\in\R$ is the parameter.
We assume there is only one parameter since, as we will see, our adjoint shadowing direction is not affected by perturbations on $s$.

We let a smooth function $J(u,s):\R^m\times\R\rightarrow\R$ be the instantaneous objective function, 
and the objective is obtained by averaging $J$ over a semi-infinite trajectory:
\begin{equation} \label{eq:average J}
  \avg{J}:= \lim\limits_{T\rightarrow\infty} \frac{1}{T}\integrate J(u,s)dt.
\end{equation}
For simplicity of discussions, we assume the system has a global attractor, hence $ \avg{J} $ only depends on $s$ but not on initial condition $u_0$.
We are interested in computing the sensitivity $d\avg{J} / d s$.
With above preparations we can now define the adjoint shadowing direction and then state the main theorem for hyperbolic flows.

\begin{definition}[adjoint shadowing for flows] \label{def:adjoint shadowing direction}
  On a trajectory $u(t), t\ge 0$ on the attractor, 
  the adjoint shadowing direction $\av: \R_+\rightarrow\R^m$ is defined as a function with the following properties:
  \begin{enumerate}
    \item $\av$ solves an inhomogeneous adjoint equation:
      \begin{equation} \label{e:av solve inhomo}
        \dd {\av}\tau + f_u^T \av = - J_u \,,
      \end{equation}
      where subscripts are partial derivatives, that is, $f_u = \partial f /\partial u$, $J_u = \partial J /\partial u$.
    \item $\av(t=0)$ has zero component in the unstable adjoint subspace. 
    \item $\|\av(t)\|$ is bounded by a constant for all $t\in\R_+$.
    \item The averaged inner-product of $\av$ and $f$ is zero:
      \begin{equation}
        \ip{\av, f}_{avg} := \lim_{T\rightarrow \infty} \frac 1T \int_0^T \ip{\av(t), f(t)} = 0 \,,
      \end{equation} 
      where $\ip{\cdot,\cdot}$ is the inner-product on Euclidean space.
  \end{enumerate}
\end{definition}
The unstable adjoint subspace will be defined in section~\eqref{s:adjoint flow}.
As we shall see, the last property of adjoint shadowing directions is mainly for uniqueness.
Then we state the main theorem of this paper for hyperbolic flows.

\begin{theorem}[adjoint shadowing for flows] \label{thm:main theorem}
  For a uniform hyperbolic dynamical system with a global compact attractor,
  on a trajectory on the attractor,
  there exists a unique adjoint shadowing direction.
  Further, we have the adjoint sensitivity formula:
  \begin{equation}\label{e:adjoint sensitivity}
    \dd {\avg{J}} {s} = \lim_{T\rightarrow\infty} \integrate \ip{\av, f_s} + J_s \, dt \,.
  \end{equation}
\end{theorem}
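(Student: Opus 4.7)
I plan to establish existence, uniqueness, and the sensitivity formula by leveraging the exponential dichotomy of the adjoint propagator $\ad$ on the compact attractor, which will be developed in the subsequent sections and the appendix. The hyperbolic splitting gives a decomposition of $\R^m$ at each trajectory point into a stable adjoint subspace, a one-dimensional neutral adjoint subspace, and an unstable adjoint subspace, with uniformly bounded invariant projections.

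For existence, I would construct $\av$ via Duhamel's formula tailored to each adjoint subspace. The unstable-adjoint component of $\av$ is automatically bounded when obtained by integrating the adjoint propagator against the projected inhomogeneity from $0$ to $t$, and its initial value can be freely set to zero to satisfy property~2. The stable-adjoint component requires choosing the stable-adjoint part of $\av(0)$ as a specific absolutely convergent integral (convergent because $\ad$ contracts in the relevant direction on that subspace and $J_u$ is bounded on the compact attractor); this choice is forced by boundedness. The one-dimensional neutral-adjoint component is determined up to a scalar by solving a scalar inhomogeneous ODE along the trajectory, and that scalar is pinned down by property~4, using that the neutral adjoint direction pairs nontrivially with $f$ in an averaged sense, a fact I would extract from the definition of the neutral adjoint CLV in the next sections.

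For uniqueness, the difference $\aw$ of two adjoint shadowing directions is a bounded homogeneous adjoint solution on $[0,\infty)$ with zero unstable-adjoint component at $t=0$ and $\ip{\aw,f}_{avg}=0$. Forward boundedness together with the dichotomy forces the stable-adjoint component at $t=0$ to vanish as well, so $\aw$ lives entirely in the one-dimensional neutral adjoint subspace, where it is a constant multiple of the homogeneous neutral adjoint solution. Property~4 then collapses to a scalar equation whose unique solution is zero. For the sensitivity formula, I would invoke the classical tangent shadowing direction $v$ (a bounded solution of $\dot v = f_u v + f_s$) and the tangent sensitivity identity $d\avg{J}/ds = \lim_{T\to\infty} \frac{1}{T}\integrate (\ip{J_u, v} + J_s) dt$. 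Integration by parts against the adjoint equation \eqref{e:av solve inhomo} converts $\integrate \ip{J_u, v} dt$ into $\integrate \ip{\av, f_s} dt$ plus boundary terms $\ip{\av, v}\big|_0^T$; the boundary term at $t=0$ vanishes by the adjoint--tangent CLV duality between $\av(0)$ and a judicious choice of $v(0)$, and the term at $t=T$ is $O(1)$ and hence negligible after the $1/T$ normalization, with any residual contribution from the neutral directions absorbed by property~4.

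The hardest part will be handling the neutral direction, which is absent for diffeomorphisms and is precisely the reason property~4 is imposed in Definition~\ref{def:adjoint shadowing direction}. Carefully identifying the one-dimensional adjoint neutral subspace and tracking how it pairs with $f$ under both the averaged-orthogonality condition and the CLV duality will rely on the preparation deferred to the following sections and the appendix. A secondary subtlety is verifying convergence of the integral defining the stable-adjoint component of $\av(0)$, which requires the uniform contraction rates supplied by the exponential dichotomy on the compact attractor.
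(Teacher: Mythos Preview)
Your existence and uniqueness arguments are essentially the paper's: the paper constructs $\av = \av^\pm + \av^0$ where $\av^\pm$ is exactly the Duhamel-type integral you describe (equation~\eqref{e:define_avpm_semiinfi}) and $\av^0 = -\tilde{J}\,\ay/\ip{\ay,f}$ is the scalar multiple of the neutral adjoint CLV pinned down by property~4; uniqueness is proved by the dichotomy argument you outline.

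The gap is in your derivation of the sensitivity formula. For hyperbolic \emph{flows} there is in general no bounded solution of $\dot v = f_u v + f_s$: the neutral component $P^0 f_s$ is a time-dependent multiple of $f$, and the corresponding neutral part of any solution grows like $\int_0^t \ip{P^0 f_s, f}/\ip{f,f}\,d\tau$, which is linear in $t$ unless $\eta := -\ip{f, P^0 f_s}/\ip{f,f}$ happens to have zero time average. The tangent sensitivity identity you invoke is therefore false as stated; the correct identity (equation~\eqref{e:djds_lss}) carries an extra time-dilation term $\eta\tilde{J}$ and uses the bounded solution $v^\pm$ of $\dot v = f_u v + P^\pm f_s$ in place of your $v$. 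The paper's route to \eqref{e:adjoint sensitivity} is to start from that correct tangent formula and dualize each piece separately: $\ip{J_u, v^\pm_T}_{L^2} = \ip{\av^\pm_T, f_s}_{L^2}$ by an adjoint-of-the-operator computation (your integration-by-parts step, but applied to the non-neutral part only), and $\int_0^T \eta\tilde{J}\,dt = \ip{\av^0, f_s}_{L^2}$ via the explicit formula for $\av^0$. Your catch-all phrase ``any residual contribution from the neutral directions absorbed by property~4'' does not cover this: property~4 fixes the free scalar in $\av$, but it does not by itself manufacture the $\eta\tilde{J}$ term on the tangent side, which is where the neutral contribution to the sensitivity actually lives.
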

The definition of hyperbolic flows can be found in section~\ref{s:tangent flow}.

\subsection{Adjoint shadowing in hyperbolic diffeomorphisms}

The governing equation for a uniform hyperbolic diffeomorphism is: 
\begin{equation} \label{e:primal_system_diffeo}
  u_{i+1} = f(u_i,s), \quad i\ge 0\,.
\end{equation}
The objective is:
\begin{equation} \label{eq:average_J_diffeo}
  \avg{J}:= \lim\limits_{N\rightarrow\infty} \frac{1}{N} \sum_{i=0}^{N-1} J(u_i,s).
\end{equation}
Similar to flows, we assume $u_i\in \R^m$, $f(u,s)$ and $J(u,s)$ are smooth,
and there is only one parameter $s\in\R$.

\begin{definition}[adjoint shadowing for diffeomorphisms] \label{def:adjoint shadowing direction diffeomorphisms}
  On a trajectory $\{u_l\}_{l=0}^{\infty}$ on the attractor,
  The adjoint shadowing direction $\{\av_l\}_{l=0}^{\infty}$ is a sequence with the following properties:
  \begin{enumerate}
    \item $\{\av_l\}_{l=0}^{\infty}$ solves an inhomogeneous adjoint equation:
      \begin{equation} 
        \av_{l} = f_{ul}^T \av_{l+1} + J_{ul} \,, 
      \end{equation}
      where $f_{ul}:=$ $\partial f/ \partial u (u_l,s)$, and $J_{ul}:=$ $\partial J/ \partial u (u_l,s)$.
    \item $\av_0$ has zero component in the unstable adjoint subspace. 
    \item $\|\av_l\|$ is bounded by a constant for all $l\ge 0$.
  \end{enumerate}
\end{definition}

\begin{theorem}[adjoint shadowing for hyperbolic diffeomorphisms] \label{thm:main theorem for diffeomorphisms}
  For a uniform hyperbolic diffeomorphism with a global compact attractor,
  there exists a unique adjoint shadowing direction.
  Further, we have the adjoint sensitivity formula:
  \begin{equation}\label{e:djds_adjoint_diffeo}
    \dd {\avg{J}} {s} = \lim_{N\rightarrow \infty} \frac 1N \summation \left( \ip{\av_{l+1}, f_{sl}} + J_{si} \right) \,.
  \end{equation}
\end{theorem}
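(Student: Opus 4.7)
The plan is to model the proof on the flow case (Theorem~\ref{thm:main theorem}), with a significant simplification because there is no neutral subspace. I would split the argument into existence, uniqueness, and the sensitivity formula. For existence I would use the hyperbolic splitting $\R^m = E^u_l\oplus E^s_l$ of the tangent space, which by duality gives $\R^m = \ad^{u*}_l\oplus \ad^{s*}_l$ with $\ad^{u*}_l=(E^s_l)^\perp$ and $\ad^{s*}_l=(E^u_l)^\perp$. Writing $J_{ul}=J^{s*}_{ul}+J^{u*}_{ul}$ along this splitting, and letting $F_l^j$ denote the tangent propagator from step $l$ to step $j$ (inverted when $j<l$), my candidate is
\[
  \av_l \;:=\; \sum_{j=l}^{\infty}(F_l^j)^T J^{s*}_{uj} \;-\; \sum_{j=0}^{l-1}(F_l^j)^T J^{u*}_{uj}.
\]
The forward sum converges geometrically because $(F_l^j)^T$ contracts on the adjoint-stable bundle at a rate $\eta^{j-l}$ with $\eta<1$; the backward sum is uniformly bounded in $l$ because $(F_l^j)^T=(F_j^l)^{-T}$ contracts on the adjoint-unstable bundle at rate $\eta^{l-j}$. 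Both contraction rates come from the hyperbolic estimates for the adjoint propagator adapted from Section~\ref{s:adjoint flow} and the appendices. A short identity $f_{ul}^T(F_{l+1}^j)^T=(F_l^j)^T$ confirms the inhomogeneous adjoint equation, and at $l=0$ the backward sum is empty, so $\av_0\in\ad^{s*}_0$—precisely zero component in the unstable adjoint subspace.

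Uniqueness follows by standard hyperbolic arguments. The difference $\aw:=\av-\av'$ of two adjoint shadowing directions solves the homogeneous adjoint equation, is bounded on $[0,\infty)$, and has $\aw_0\in\ad^{s*}_0$. Any bounded forward homogeneous solution must have $\aw_0\in\ad^{u*}_0$, since an $\ad^{s*}_0$-component would blow up exponentially as $l\to\infty$; hence $\aw_0\in\ad^{s*}_0\cap\ad^{u*}_0=\{0\}$ because $\R^m=\ad^{u*}_0\oplus\ad^{s*}_0$ is a direct sum, and invertibility of $f_{ul}$ then propagates $\aw\equiv 0$. For the sensitivity formula I would start from the primal shadowing identity
\[
  \dd{\avg{J}}{s}=\lim_{N\to\infty}\frac 1N\summation\bigl(\ip{J_{ui},v_i}+J_{si}\bigr),
\]
where $v$ is the bounded primal shadowing direction solving $v_{i+1}=f_{ui}v_i+f_{si}$. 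Substituting $J_{ui}=\av_i-f_{ui}^T\av_{i+1}$ and using the tangent recursion $f_{ui}v_i=v_{i+1}-f_{si}$, the cross-terms telescope into $\ip{\av_0,v_0}-\ip{\av_N,v_N}+\summation\ip{\av_{i+1},f_{si}}$. The boundary terms are $O(1)$ by uniform boundedness of $\av$ and $v$, so they vanish after dividing by $N$ and taking the limit, leaving the stated formula.

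I expect the main obstacle to be the uniform convergence of the backward sum: it has $l$ terms, each only contracted by $\eta^{l-j}$, so the $l$-uniform bound requires a single contraction rate $\eta<1$ and a uniform lower bound on the angle between $E^u_l$ and $E^s_l$. Both follow from uniform hyperbolicity on the compact attractor, which also bounds the operator norms of the projections defining $J^{s*}$ and $J^{u*}$; once these ingredients are in hand, the remaining algebra is routine.
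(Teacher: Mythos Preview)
Your proposal is correct and, for existence and uniqueness, is essentially identical to the paper's argument: the candidate
\[
  \av_l \;=\; \sum_{j=l}^{\infty}(F_l^j)^T J^{s*}_{uj} \;-\; \sum_{j=0}^{l-1}(F_l^j)^T J^{u*}_{uj}
\]
is exactly the paper's $\av_l = \sum_{i=l}^{\infty} \ad_i^{l} \ap_{i}^-J_{ui}  -  \sum_{i=0}^{l-1} \ad^{l}_i \ap_{i}^+J_{ui}$ in different notation, and the verification of properties (1)--(3) and of uniqueness proceeds the same way.

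Where you genuinely diverge is in deriving the sensitivity formula. The paper \emph{discovers} the candidate by starting from Wang's primal formula with the finite-horizon shadowing direction $v_{Ni}$, swapping the order of the double sum (a discrete Fubini), and reading off $\av_{N,l+1}$ as what pairs with $f_{sl}$; only afterwards does it pass to the limit $\av_l=\lim_{N\to\infty}\av_{Nl}$. You instead \emph{postulate} the candidate, then obtain the sensitivity formula by summation-by-parts: substituting $J_{ui}=\av_i-f_{ui}^T\av_{i+1}$ and $f_{ui}v_i=v_{i+1}-f_{si}$ makes the cross terms telescope, and the boundary terms vanish after division by $N$ because both $\av$ and $v$ are bounded. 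Your route is shorter and more transparent once the candidate is in hand; the paper's route has the advantage of \emph{motivating} the candidate rather than pulling it out of the air. One small point to watch: your telescoping argument uses the \emph{limiting} primal shadowing direction $v_i$, whereas the cited result of Wang is stated with the truncated $v_{Ni}$. You should note that $\|v_i-v_{Ni}\|\le C e^{-\lambda(N-i)}$ (from the unstable decay estimate), so $\tfrac1N\sum_{i=0}^{N-1}\langle J_{ui},v_i-v_{Ni}\rangle\to 0$; this is the discrete analogue of what the paper does in Section~\ref{s:extend to semi infinite}.
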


In the following text, we first spend several sections discussing adjoint shadowing in hyperbolic flows,
then we spend one section discussing adjoint shadowing in hyperbolic diffeomorphisms.

\section{Preparations}

For hyperbolic flows, constructing the formula of adjoint shadowing direction starts from tangent shadowing directions, 
the existence of which depends on the Covariant Lyapunov Vector (CLV) structure of the tangent flow.
Additionally, existence and uniqueness of the adjoint shadowing direction depends on the CLV structure of the adjoint flow.
In this section, as preparations, we study the tangent flow and adjoint flow of hyperbolic flows.

\subsection{The tangent flow} \label{s:tangent flow}

We begin by defining the homogeneous and inhomogeneous tangent equations and their solutions.
\begin{definition}[tangent equations]
  A homogeneous tangent solution $w(t):\R \rightarrow \R^m$ is a function which solves the homogeneous tangent equation:
  \begin{equation} \label{e:homogeneous tangent}
  \dd{w}{t} -f_u w =0 \,.
  \end{equation}
  An inhomogeneous tangent solution $v(t)$ is a function which solves:
  \begin{equation} \label{e:inhomogeneous tangent}
  \dd{v}{t} -f_u v = g(t) \,,
  \end{equation}
  where $g(t):\R \rightarrow \R^m$ is a vector-valued function of time.
\end{definition}

Tangent equations are also called variational equations.
Intuitively, homogeneous tangent solutions describe the evolution of perturbations on trajectories due to perturbations on initial conditions,
while inhomogeneous tangent solutions describe perturbations on trajectories due to perturbations on the system parameter $s$.
For homogeneous tangent equations, 
we define a propagation operator which maps the initial condition of a homogeneous tangent solution to its value at a later time.
This is the tangent flow operator.

\begin{definition}[tangent flow operator]
  The tangent flow operator $D_{t_1}^{t_2}$ is a linear operator on $\R^m$ whose action on any $w_1\in \R^m$ is given by:
  solving the homogeneous tangent solution $w(t)$ on time span $[t_1,t_2]$ with initial condition $w(t_1)=w_1$, then $D_{t_1}^{t_2} w_1 = w(t_2)$.
\end{definition}

Under this notation, for any $w_1\in \R^m$, $w(t) = D_{t_1}^{t}w_1$ is a homogeneous tangent solution.
The fact that $w(t)$ satisfies the homogeneous tangent equation can be written as:
\begin{equation}
\dd {}t \left(D_{t_1}^{t} w_1\right)= f_u(t) D_{t_1}^{t} w_1
\end{equation}
In this paper we use the tangent flow operator for writing down homogeneous tangent solutions with an given initial value.
In particular, we use this operator notation to define the Covariant Lyapunov Vectors (CLV), 
which are homogeneous tangent solutions whose Euclidean norm grows like exponential functions.
\begin{definition}[Lyapunov exponents and vectors]
  In this paper, a tangent CLV with Lyapunov Exponent (LE) $\lambda$ is a homogeneous tangent solution $\zeta(t)$ such that there is a constant $C$, 
  for any $t_1, t_2$, 
  \begin{equation} \label{e:CLV defn}
  \| \zeta(t_2) \| \le C e^{\lambda(t_2-t_1)}\| \zeta(t_1) \| \, . 
  \end{equation}
  Above inequality can also be written as:
  \begin{equation}
  \| D_{t_1}^{t_2} \zeta(t_1) \| \le C e^{\lambda(t_2-t_1)}\| \zeta(t_1) \| \, . 
  \end{equation}
\end{definition}

Notice that in both definitions for the flow operator and CLVs, it can happen either $t_1>t_2$, $t_1<t_2$, or $t_1=t_2$.
Moreover, by interchanging $t_1$ and $t_2$ in equation (\ref{e:CLV defn}), we get:
\begin{equation}
\| \zeta(t_1) \| \le C e^{\lambda(t_1-t_2)}\| \zeta(t_2) \| \, . 
\end{equation}
Together with equation (\ref{e:CLV defn}), we have:
\begin{equation}
\frac{1}{C} e^{\lambda(t_2-t_1)}\| \zeta(t_1) \| \le \| \zeta(t_2) \| \le C e^{\lambda(t_2-t_1)}\| \zeta(t_1) \| \, . 
\end{equation}
This shows that a CLV $\|\zeta(t)\|$ is bounded from both side by exponential functions with the same index but different coefficients.

We call the tangent CLVs with positive exponents unstable tangent CLVs, those with negative exponents stable, and the CLV with zero exponent the neutral CLV.
In this paper, we sort CLVs by the descending order of their corresponding LEs.
In this way, $\zeta_1$ is the fastest growing CLV, while $\zeta_m$ is the fastest decaying one.
We denote the number of unstable CLVs by $\mus$, where $us$ is short for `unstable'.
In this paper we assume the system is uniform-hyperbolic, which will give us more properties of CLVs.

\begin{definition}[uniform-hyperbolicity] \label{d:uniform hyperbolic}
  In this paper, a flow given by equation~\eqref{e:primal_system} is said to be uniform-hyperbolic if 
  there are $C\in(0,\infty)$ and $\lambda>0$, 
  such that for all $u$ on the attractor, there is a splitting of the tangent space $T_u=\R^m$ into stable, unstable, and neutral subspaces, that is,
  $T_u = V^+(u)\oplus V^-(u) \oplus  V^0(u)$, 
  such that on the trajectory passing $u$ at time $0$, we have:
  \begin{enumerate}
  \item for any $v\in V^+(u)$ and $t\le0$, $\|D_0^t v\|\le C e^{-\lambda |t|} \|v\|$;
  \item for any $v\in V^-(u)$ and $t\ge0$, $\|D_0^t v\|\le C e^{-\lambda |t|} \|v\|$;
  \item for any $v\in V^0(u)$, there is $a\in \R$ such that  $v= a f(u)$.
  \end{enumerate}
\end{definition}

We acknowledge that only a few dynamical systems are strictly uniform-hyperbolic; 
however, many dynamical systems in real-life approximately satisfy such assumption.
In fact, the uniform-hyperbolicity assumption is assumed in several important results, such as the shadowing lemma and the existence of SRB measure.
Moreover, shadowing-based algorithms like NILSS also assumes uniform-hyperbolicity, 
and it can give accurate sensitivities for real-life chaotic fluid mechanics problems \cite{Ni_CLV_cylinder,Ni_fdNILSS}.
Following these precedents, we also work under the uniform-hyperbolic assumption.

Tangent flow operators are cocycles and hence by the Oseledets theorem, there are $m$ CLVs.
By uniform-hyperbolicity, there is only one zero LE and absolute values of all the other LEs are greater than $\lambda$.
We can also see $V^+$, $V^-$, and $V^0$ in the definition of hyperbolicity are in fact span of unstable, stable, and the neutral CLVs, respectively. 
A look into some dynamical systems texts, such as \cite{Rezakhanlou2018}, tells us that directions of CLVs are continuous on the attractor.
Hence on a compact attractor we can find $\alpha>0$ such that at any time $t_0$, 
the angle between any CLV and the span of the rest of the CLVs is strictly larger than $\alpha$.
An immediate consequence is that at any time, the CLVs form a basis for $\R^m$, 
and hence we can define projection operators which project vectors onto this basis.

\begin{definition}[tangent projection operators]\label{def:tangent P}
  The projection operator onto the $j$-th CLV at time $t$, $P^j(t):\R^m\rightarrow\R^m$, is a linear operator such that
  \begin{equation}
    P^j(t) v := a_j \zeta _j(t) \,,
  \end{equation}
  where $v\in\R^m$, and $a_j$ is the $j$-th coordinate of $v$ under basis $\{\zeta_j(t)\}_{j=1}^m$,
  We define also the projection operators onto the unstable subspace, stable subspace, neutral subspace, and non-neutral subspace as:
  \begin{equation}
    P^+ := \sum_{j=1}^{\mus}P^j ;\;
    P^- := \sum_{j=\mus+2}^{n}P^j ;\;
    P^0 := P^{\mus+1} ;\;
    P^\pm := P^+ + P^- = I -P^0 \,.
  \end{equation}
\end{definition}

Lemma~\ref{l:matrix_form_of_P} in appendix \ref{app:properties_of_tangent_flow} shows how to write above projection operators in matrix form.
Additionally, lemma~\ref{l:DP=PD} shows that the tangent flow operator interchanges with the projection operator,
that is, for any $v$, $\tau$, $t$, and any projection operator $P$ defined above, we have $\dtaut P(\tau) = P(t) \dtaut$.

\subsection{The adjoint flow}\label{s:adjoint flow}

Adjoint CLVs are solutions of homogeneous adjoint equations whose norms grow exponentially.
The existence of adjoint CLVs are give by the Oseledets theorem, 
which is computationally verified by Kuptsov and Parlitz in \cite{Kuptsov2012_define_adjoint_CLV}.
However, for our purpose of deriving the adjoint shadowing direction, we want to understand in more details how adjoint CLVs relate to tangent CLVs:
these knowledge will help us construct the adjoint shadowing direction from its tangent counterpart.
This subsection investigates the CLV structure of the adjoint flow,
more specifically, we will state definitions and some properties of adjoint equations, adjoint flow operators, adjoint projection operators, and adjoint CLVs.

\begin{definition}[adjoint equations] \label{d:adjoint equations}
  A homogeneous adjoint solution $w(t):\R \rightarrow \R^m$ is a function which solves the homogeneous adjoint equation:
  \begin{equation} \label{e:homogeneous adjoint}
    \dd wt + f_u ^T w =0,
  \end{equation}
  where $\cdot^T$ is the transpose of a matrix.
  An inhomogeneous adjoint solution is one which solves:
  \begin{equation}\label{e:inhomogeneous adjoint}
    \dd {w}t + f_u ^T w =g(t),
  \end{equation}
  where $g(t):\R \rightarrow \R^m$ is a vector-valued function of time.
\end{definition}

In numerical implementations, we typically solve adjoint equations backwards in time.
This is because, as we will see, when solving backward in time, 
the dimension of the unstable adjoint subspace is the same as the unstable tangent subspace, which is typically much lower than $m$.
On the other hand, if we solve the adjoint equation forward in time, 
the unstable dimension will be much larger, causing strong numerical instability.

\begin{definition}[adjoint flow operator]
  The adjoint flow operator $\overline{D}^{t_1}_{t_2}:\R^m \rightarrow \R^m$ is a linear operator such that its action on any vector $w_2\in R^m$ is given by: 
  solve the homogeneous adjoint equation $w(t)$ on time span $[t_1, t_2]$ with terminal condition $w(t_2) = w_2$, 
  then $\ad^{t_1}_{t_2} w_2 = w(t_1)$.
\end{definition}

Notice that the definition holds for both $t_1<t_2$ and $t_1>t_2$. 
Under this notation,  $w(\tau) = \ad_{t_2}^{\tau}w_2$ is a homogeneous adjoint solution.
The fact that $w(\tau)$ satisfies the homogeneous adjoint equation can be written as:
\begin{equation} \label{e:diff_adjoint}
  \dd {}\tau \left(\ad_{t_2}^{\tau} w_2\right)= -f_u^T(\tau) \ad_{t_2}^{\tau} w_2
\end{equation}
Lemma~\ref{l:adjoint_pair_operators_products} shows that  $\ip{\ad^{t_1}_{t_2}w_2,v_1} = \ip{w_2, D^{t_2}_{t_1}v_1}$, 
this means $\ad^{t_1}_{t_2}$ is indeed the adjoint operator of $D^{t_2}_{t_1}$.

\begin{definition}[adjoint projection operators] \label{d:adjoint projection}
  At a given time $t$, the adjoint projection operator $\ap(t):\R^m \rightarrow \R^m$ is given by:
  \begin{equation}
    \ap(t) := P^T(t) \,,
  \end{equation}
  where $\cdot^T$ is the matrix transpose,
  and the projection operator $P$ can be either $P^j$, $P^+$, $P^-$, $P^0$, or $P^\pm$.
\end{definition}

Lemma~\ref{l:aP_P_orthogonal} shows that for $i\ne j$, the image space of the $P^i(t)$ is orthogonal to $\ap^j(t)$ for any $t$.
Lemma \ref{l:aDP=aPD} shows that similar to tangent case, adjoint flow operators commute with adjoint projection operators, that is,
$\adto \ap(t_2) = \ap(t_1)\adto$. 
Then we can use projection operators to define adjoint CLVs.

\begin{definition}[adjoint Lyapunov exponents and vectors]
  In this paper, an adjoint CLV with exponent $\lambda$ is a homogeneous adjoint solution $\az(t)$ such that there is a constant $C$, for any $t_1,t_2\in \R$, 
  \begin{equation}
  \| \adto \az (t_2)\|\le C e^{\lambda(t_2-t_1)} \|\az(t_2)\| \,.
  \end{equation}
  In particular, the neutral adjoint CLV, which is also denoted by $\ay$, is bounded by a constant independent of $t$.
\end{definition}

Notice that the time direction in the above definition is reversed: if the adjoint CLV grows exponentially backwards in time, its exponent is positive.
Lemma~\ref{l:ap_projects_to_adjoint_CLV} shows that the adjoint projection operator $\ap^j$ projects onto the adjoint CLV with exponent $\lambda_j$,
which is the exponent of the $j$-th tangent CLV:
this is a major result describing the relation between tangent and adjoint CLVs and not contained in previous literature. 
Another useful result is, assuming that we know the neutral adjoint CLV, $\ay$, 
lemma~\ref{l:ap0_from_ay} gives a formula for $\ap^0$: $\ap^0 v  = \ip{v,f} \ay / \ip{\ay, f} $.

\section{A candidate formula for adjoint shadowing directions}\label{s:candidate formula}

In this section we derive a formula for $\av$ which satisfies the adjoint sensitivity formula given in equation~\eqref{e:adjoint sensitivity}.
In the next section, we will show this formula is indeed the adjoint shadowing direction satisfying
definition~\ref{def:adjoint shadowing direction} and we will show its uniqueness.

While proving the convergence of the Least Squares Shadowing method for hyperbolic flows \cite{Chater_convergence_LSS},
Chater et. al. also proved that, under the same assumption of theorem \ref{thm:main theorem}, we have 
\begin{equation}\label{e:djds_lss}
  \dd {\avg{J}} {s} 
  = \lim_{T\rightarrow \infty} \frac 1T \integrate \left( \ip{J_u, v^\pm_T} + \eta \tilde{J} + J_s \right) dt \,,
\end{equation}
where $\tilde{J} = J - \avg{J}$.
Notice that $\avg{J}$ is averaged on an infinite trajectory, hence $\avg{J}$ and $\tilde{J}$ do not depend on $T$.
Now fix some $T$, and $v^\pm_T(t)$ is given by:
\begin{equation} \label{e:v(t)}
  v^\pm_T(t) = \int_0^t \dtaut P^-(\tau) f_s(\tau) d\tau - \int_t^T \dtaut P^+(\tau) f_s(\tau) d\tau \,.
\end{equation}
We can check that $v^\pm_T(t)$ is an inhomogeneous tangent which solves:
\begin{equation}
  \dd{v^\pm_T}{t} = f_u v^\pm_T + P^\pm f_s \,.
\end{equation}
We define $v^\pm(t) := \lim _{T\rightarrow \infty} v^\pm_T(t)$.
And $\eta$, whose definition is independent of $T$, is given by:
\begin{equation}
  \eta = - \frac{\ip{f, P^0 f_s}}{\ip{f,f}} \,.
\end{equation}
In this paper we call the pair of functions $(v^\pm, \eta)$ the tangent shadowing direction.
We start from sensitivity formula~\eqref{e:djds_lss} to derive a candidate for adjoint shadowing directions.

\subsection{Isolating dependency on parameters}

The major computational cost in numerical methods using equation~\eqref{e:djds_lss} to compute sensitivities, such as LSS and NILSS,
is to compute $v^\pm_T$ and the corresponding $\eta$, both of which depend on $f_s$, which in turn depends on the choice of parameter $s$.
Hence we need to recompute $v^\pm_T$ and $\eta$ for every new parameter.
In our adjoint formula, we want to isolate $f_s$,
that is, we want to transform the first and second term in equation~\eqref{e:djds_lss} into inner-products of $f_s$ with some other terms.
If this is achieved, we can develop algorithms such as NILSAS, whose computational cost does not scale with the number of parameters.

We first isolate $f_s$ in the second term in equation~\eqref{e:djds_lss}.
Using lemma~\ref{l:ap0_from_ay}, we have
\begin{equation}\label{e:adjoint eta}
\begin{split}
  \integrate \eta \tilde{J} dt
  &= \integrate  - \frac{\ip{f, P^0 f_s}}{\ip{f,f}} \tilde{J} 
  = \integrate  - \ip{\frac{ \tilde{J} }{\ip{f,f}} \ap^0 f , f_s} \\
  &= \integrate  - \ip{\frac{ \tilde{J} }{\ip{\ay,f}} \ay , f_s}
  = \ip{\av^0, f_s}_{L^2}  \,,
  \end{split}
\end{equation}
where $\ay$ is the neutral adjoint CLV, and $\av^0$, which is independent of $T$, is defined as:
\begin{equation}\label{e:define_av0}
  \av^0 := - \frac{ \tilde{J} }{\ip{f,f}} \ap^0 f =   - \frac{ \tilde{J} }{\ip{\ay,f}} \ay \,.
\end{equation}

Then we spend the next few paragraphs to isolate $f_s$ in the first term in equation~\eqref{e:djds_lss}.
In other words, we can view equation (\ref{e:v(t)}) as applying a linear operator $L^\pm_T$ on a function $f_s$ to obtain $v^\pm_T = L^\pm_T(f_s)$.
Now we want to find the adjoint operator $\al^\pm_T$ of $L^\pm_T$ such that:
\begin{equation}
  \ip{J_u, L^\pm_T(f_s)}_{L^2} = \ip{\al^\pm_T(J_u), f_s}_{L^2}
\end{equation}
where $\ip{\cdot,\cdot}_{L^2}$ denotes the inner product between two functions in the function space $L^2[0,T]$:
\begin{equation}
  \ip{f,g}_{L^2} = \integrate \ip{f(t),g(t)} dt \,.
\end{equation}

To obtain $\al^\pm_T$, we expand $L^\pm_T(f_s)$ and move the operations over to $J_u$. 
More specifically, using lemma~\ref{l:DP=PD} and the definition of $\adttau$ and $\ap^-(t)$, we have
\begin{equation} \label{e:adjoint L 1} \begin{split}
  &\ip{J_u(t), L^\pm_T(f_s)(t) }_{L^2}
  = \ip{J_u(t), v^\pm_T(t)}_{L^2} \\
  =& \int_0^T \ip{J_u(t), \int_0^t \dtaut P^-(\tau)f_s(\tau) d\tau}  dt - \int_0^T \ip{J_u(t), \int_t^T \dtaut P^+(\tau)f_s(\tau) d\tau}  dt \\
  =& \int_0^T \int_0^t \ip{J_u(t), \dtaut P^-(\tau)f_s(\tau) } d\tau dt - \int_0^T  \int_t^T  \ip{J_u(t),\dtaut P^+(\tau)f_s(\tau) } d\tau dt \\
  =& \int_0^T \int_0^t \ip{J_u(t), P^-(t) \dtaut f_s(\tau) } d\tau dt - \int_0^T  \int_t^T  \ip{J_u(t), P^+(t) \dtaut f_s(\tau) } d\tau dt \\
  =& \int_0^T \int_0^t \ip{\adttau \ap^-(t) J_u(t), f_s(\tau) } d\tau dt - \int_0^T  \int_t^T  \ip{\adttau \ap^+(t) J_u(t), f_s(\tau) } d\tau dt \,,
\end{split} \end{equation}

Now we can change the integration order in equation~\eqref{e:adjoint L 1} to get:
\begin{equation} \label{e:adjoint L 2} \begin{split}
  &\ip{J_u(t), L^\pm_T(f_s)(t) }_{L^2}
  = \ip{J_u(t), v^\pm_T(t)}_{L^2} \\
  =& \int_0^T \int_\tau^T \ip{\adttau \ap^-(t) J_u(t), f_s(\tau) } dt d\tau  - \int_0^T  \int_0^\tau  \ip{\adttau \ap^+(t) J_u(t), f_s(\tau) }  dt d\tau \\
  =&\int_0^T \ip{ \Bigl( \int_\tau^T \adttau \ap^-(t) J_u(t) dt - \int_0^\tau \adttau \ap^+(t) J_u(t) dt \Bigr) , f_s(\tau) } d\tau \\
  =& \ip{ \al^\pm_T(J_u)(\tau) , f_s(\tau) }_{L^2} 
  =\ip{ \av^\pm_T(\tau), f_s(\tau) }_{L^2} \,,
\end{split} \end{equation}
where the function $\av^\pm_T$ and operator $\al^\pm_T$ are defined as:
\begin{equation} \label{e:define_avpm}
  \av^\pm_T(\tau) = \al^\pm_T(J_u)(\tau) 
  :=  \int_\tau^T \adttau \ap^-(t) J_u(t) dt - \int_0^\tau \adttau \ap^+(t) J_u(t) dt \, .
\end{equation}
Hence, $\al^\pm_T$ is the adjoint operator of $L^\pm_T$.

With equation~\eqref{e:adjoint eta} and equation~\eqref{e:adjoint L 2}, we transform the right side of equation~\eqref{e:djds_lss} to: 
\begin{equation}
  \integrate \left( \ip{J_u, v^\pm_T} + \eta \tilde{J} + J_s \right) dt = \integrate \left( \ip{\av_T, f_s} + J_s \right) d\tau\,.
\end{equation}
with $\av_T$  defined as  $\av_T :=  \av^\pm_T + \av^0$.
Now the sensitivity formula in equation~\eqref{e:djds_lss} becomes:
\begin{equation}\label{e:djds_adjoint_1}
  \dds{\avg{J}} = \lim_{T\rightarrow\infty} \frac 1T \integrate \left(\ip{\av_T, f_s} + J_s \right) d\tau\,.
\end{equation}
We have achieved our goal to isolate the dependency on $f_s$ in the expression for sensitivities.

\subsection{Extending to semi-infinite trajectories}\label{s:extend to semi infinite}

Notice that $\av^\pm_T$ and $\av_T$ still depend on $T$, but adjoint shadowing solutions are defined on a semi-infinite trajectory, so we define 
\begin{equation} \label{e:define_avpm_semiinfi}
  \av^\pm (\tau):= \lim_{T\rightarrow\infty} \av^\pm_T (\tau)
  = \int_\tau^\infty \adttau \ap^-(t) J_u(t) dt - \int_0^\tau \adttau \ap^+(t) J_u(t) dt  \,;
\end{equation}
\begin{equation} \label{e:define_av_semiinfi}
  \av (\tau):= \lim_{T\rightarrow\infty} \av_T(\tau) = \av^\pm (\tau)+ \av^0 (\tau)\,.
\end{equation}
$\av$ is our candidate for the adjoint shadowing direction.
First of all, for $\av$ to be well-defined, we show that the limit in equation~\eqref{e:define_avpm_semiinfi} exists point-wise.

\begin{lemma}
  For any $\tau$, $\lim_{T\rightarrow\infty} \av_T (\tau)$ exists.
\end{lemma}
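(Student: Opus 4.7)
My plan is to reduce the problem to showing that an improper integral converges absolutely. Since $\av_T(\tau)=\av^\pm_T(\tau)+\av^0(\tau)$ and inspection of \eqref{e:define_av0} shows that $\av^0(\tau)=-\tilde J(\tau)\ay(\tau)/\ip{\ay,f}$ does not depend on $T$, the whole question is about $\av^\pm_T(\tau)$. Moreover, in \eqref{e:define_avpm}, the second summand $\int_0^\tau \adttau \ap^+(t) J_u(t)\,dt$ has integration domain independent of $T$, so it is also $T$-independent. Hence it suffices to prove that
\begin{equation*}
  I(\tau) \;:=\; \int_\tau^\infty \adttau \ap^-(t) J_u(t)\,dt
\end{equation*}
converges absolutely.

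Next I would establish an exponential majorant for the integrand. Three uniform bounds feed into this. First, since the attractor is compact and $J$ is smooth, there exists $M_J<\infty$ with $\|J_u(t)\|\le M_J$ for all $t$. Second, as noted right after Definition~\ref{d:uniform hyperbolic}, the CLV directions are continuous on the attractor and are uniformly separated by an angle $\alpha>0$; by Definition~\ref{d:adjoint projection} the same is true of the adjoint projections, so each $\ap^j(t)$, and hence $\ap^-(t)=\sum_{j>\mus+1}\ap^j(t)$, has an operator norm bounded uniformly in $t$ by some constant $C_P$. Third, by Lemma~\ref{l:ap_projects_to_adjoint_CLV}, each $\ap^j(t)J_u(t)$ lies along the adjoint CLV $\az_j(t)$ whose Lyapunov exponent $\lambda_j<-\lambda$ for $j>\mus+1$, so by the definition of adjoint CLV,
\begin{equation*}
  \|\adttau \ap^j(t) J_u(t)\| \;\le\; C\, e^{\lambda_j(t-\tau)} \|\ap^j(t)J_u(t)\| \;\le\; C C_P M_J\, e^{-\lambda(t-\tau)}
\end{equation*}
for every $t\ge \tau$. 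Summing over the finitely many stable indices $j=\mus+2,\dots,m$ gives $\|\adttau \ap^-(t) J_u(t)\|\le K e^{-\lambda(t-\tau)}$ for a constant $K$ independent of $t$ (and locally uniform in $\tau$).

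Integrating the majorant over $[\tau,\infty)$ yields a finite bound, so $I(\tau)$ converges absolutely, and consequently $\lim_{T\to\infty}\av_T(\tau)=\av(\tau)$ exists pointwise, as required. The only genuinely delicate step is the third bound above: it relies on Lemma~\ref{l:ap_projects_to_adjoint_CLV} to identify the image of $\ap^-$ with the span of the stable adjoint CLVs together with the uniform transversality of the CLV splitting, so that both the coefficients in the CLV expansion of $\ap^-(t)J_u(t)$ and the individual CLV decay rates can be controlled uniformly in $t$. Once this structural input is in place, the rest is a routine exponential-majorant argument.
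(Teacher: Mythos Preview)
Your proposal is correct and follows essentially the same route as the paper: reduce to the $T$-dependent part of $\av^\pm_T$, then control $\int_\tau^T \adttau \ap^-(t)J_u(t)\,dt$ via the exponential decay of the adjoint flow on the stable adjoint subspace together with the uniform bounds on $\ap^-$ and $J_u$. The only cosmetic difference is that the paper phrases the convergence as a Cauchy-criterion estimate on $\av^\pm_{T_2}(\tau)-\av^\pm_{T_1}(\tau)$ rather than as absolute convergence of the improper integral, and it invokes the decay for $\ap^-$ directly (via Lemma~\ref{l:bound for ap}) rather than summing over individual stable indices; neither change is substantive.
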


\begin{proof}
  We first fix an arbitrary $\tau\ge 0$.
  Since $\av^0$ does not depend on $T$, we only need to show $\lim_{T\rightarrow\infty} \av^\pm_T (\tau)$ exists.
  We just need to show that $\av^\pm_{T_2}(\tau) - \av^\pm_{T_1}(\tau)\rightarrow 0$ as $T_1, T_2 \rightarrow \infty$.
  We can assume $T_2 \ge T_1 \ge \tau$, and in view of equation~\eqref{e:define_avpm},
  \begin{equation} \begin{split}
    &\| \av^\pm_{T_2}(\tau) - \av^\pm_{T_1}(\tau) \|
    = \left\| \int_{T_1}^{T_2} \adttau \ap^-(t) J_u(t) dt \right\|
    \le \int_{T_1}^{T_2} \left\|\adttau \ap^-(t) J_u(t) \right\| dt \,.
  \end{split} \end{equation}
  By our hyperbolic assumption, stable adjoint CLVs have exponents smaller than $-\lambda$. 
  Now by lemma~\ref{l:bound for ap}, and that a continuous $J_u$ is bounded on a compact attractor, we have
  \begin{equation} \begin{split} \label{e:estimate}
    &\| \av^\pm_{T_2}(\tau) - \av^\pm_{T_1}(\tau) \|
    \le \int_{T_1}^{T_2} e^{-\lambda(t-\tau)} \left\|\ap^-(t) J_u(t) \right\| dt \\
    \le& C_\alpha  \int_{T_1}^{T_2} e^{-\lambda(t-\tau)}\|J_u(t)\| dt 
    \le C_\alpha \|J_u(t)\|_{L^\infty}\int_{T_1}^{T_2} e^{-\lambda(t-\tau)} dt \\
    =& \frac 1\lambda C_\alpha \|J_u(t)\|_{L^\infty} \left[e^{-\lambda(T_1-\tau)}- e^{-\lambda(T_2-\tau)}\right] \rightarrow 0, 
    \textnormal{ as } T_1, T_2\rightarrow \infty \,.
  \end{split} \end{equation}
\end{proof}

Notice that there is slight difference between the adjoint sensitivity formula
in  equation~\eqref{e:adjoint sensitivity} and equation~\eqref{e:djds_adjoint_1}.
That is, equation~\eqref{e:adjoint sensitivity} has first the $T$ in $\av_T$ goes to infinity, 
then the $T$ in the integration goes to infinity,
whereas in equation~\eqref{e:djds_adjoint_1} the two limit process happen at the same time.
To show equivalence between the two formula, we prove the following lemma.

\begin{lemma}We have:
  \begin{equation} 
    \lim_{T\rightarrow\infty} \frac 1T \integrate \ip{\av_T, f_s}  d\tau
    = \lim_{T\rightarrow\infty} \frac 1T \integrate \ip{\av, f_s}  d\tau \,.  
  \end{equation}
\end{lemma}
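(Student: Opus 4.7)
The plan is to show that $\av_T(\tau) \to \av(\tau)$ rapidly enough in $T-\tau$ that the difference of the two averaged expressions vanishes in the limit. Since $\av^0$ does not depend on $T$, the entire $T$-dependence of $\av_T$ sits in $\av^\pm_T$, so it suffices to control $\av(\tau) - \av_T(\tau) = \av^\pm(\tau) - \av^\pm_T(\tau)$. For any $0\le \tau \le T$, comparing equations~\eqref{e:define_avpm} and~\eqref{e:define_avpm_semiinfi} gives
\begin{equation}
  \av(\tau) - \av_T(\tau) = \int_T^\infty \adttau \ap^-(t) J_u(t)\, dt.
\end{equation}

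Next I would recycle the exponential bound used in the preceding convergence proof. By the uniform-hyperbolicity assumption, stable adjoint CLVs have exponent at most $-\lambda$, so lemma~\ref{l:bound for ap} together with the boundedness of $J_u$ on the compact attractor yields, exactly as in~\eqref{e:estimate},
\begin{equation}
  \|\av(\tau) - \av_T(\tau)\| \le \frac{C_\alpha \|J_u\|_{L^\infty}}{\lambda}\, e^{-\lambda(T-\tau)}.
\end{equation}
Taking the inner product with $f_s$, which is bounded on the attractor, and writing $C := \tfrac{1}{\lambda} C_\alpha \|J_u\|_{L^\infty} \|f_s\|_{L^\infty}$, I get the pointwise estimate
\begin{equation}
  \left|\ip{\av(\tau) - \av_T(\tau),\, f_s(\tau)}\right| \le C\, e^{-\lambda(T-\tau)}.
\end{equation}

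Integrating and dividing by $T$ gives
\begin{equation}
  \left|\frac{1}{T}\integrate \ip{\av - \av_T,\, f_s}\, d\tau\right|
  \le \frac{C}{T}\integrate e^{-\lambda(T-\tau)}\, d\tau
  = \frac{C\,(1 - e^{-\lambda T})}{\lambda T} \xrightarrow[T\to\infty]{} 0,
\end{equation}
which establishes the claimed equality. The argument is essentially a routine dominated-decay estimate; the only substantive ingredient is the exponential bound on $\av - \av_T$, which follows directly from uniform hyperbolicity and the stable-subspace decay already exploited in the existence proof of $\av$. Consequently I do not anticipate a serious obstacle — one only needs to verify carefully that the constants $C_\alpha$ and $\|J_u\|_{L^\infty}$, $\|f_s\|_{L^\infty}$ are independent of both $\tau$ and $T$, which is immediate since the trajectory stays on the compact attractor.
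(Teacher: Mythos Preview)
Your proposal is correct and follows essentially the same approach as the paper: reduce to $\av^\pm-\av^\pm_T$ since $\av^0$ is $T$-independent, invoke the exponential decay estimate~\eqref{e:estimate} (with $T_1=T$, $T_2\to\infty$) to bound $\|\av-\av_T\|$ by $\tfrac{C_\alpha\|J_u\|_{L^\infty}}{\lambda}e^{-\lambda(T-\tau)}$, pair with the bounded $f_s$, and integrate to get $O(1/T)$. The paper's proof is the same computation with the same constants.
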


\begin{proof}
  It is equivalent to show that 
  \begin{equation}
    \lim_{T\rightarrow\infty} \frac 1T \integrate \ip{\av - \av_T, f_s}  d\tau
    = \lim_{T\rightarrow\infty} \frac 1T \integrate \ip{\av^\pm - \av^\pm_T, f_s}  d\tau = 0\,.
  \end{equation}
  Let $T_2\rightarrow\infty$ in equation~\eqref{e:estimate}, and using that a continuous $f_s$ is bounded on a compact attractor, we have
  \begin{equation} \begin{split}
    &\left| \frac 1T \integrate \ip{\av^\pm - \av^\pm_T, f_s}  d\tau \right|
    \le \frac 1T \integrate \left\| \av^\pm - \av^\pm_T \right\| \|f_s\|  d\tau \\
    \le & \frac 1{\lambda }\ C_\alpha \|J_u(t)\|_{L^\infty} \|f_s\|_{L^\infty} \frac 1T \integrate e^{-\lambda(T-\tau)}  d\tau \\
    =& \frac 1{\lambda^2 }\ C_\alpha \|J_u(t)\|_{L^\infty} \|f_s\|_{L^\infty} \frac 1T(1-e^{-\lambda T}) 
    \rightarrow 0 \,.
  \end{split} \end{equation}
\end{proof}

As a result we can change $\av_T$ to $\av$ in equation~\eqref{e:djds_adjoint_1},
and show that our $\av$ defined in this section can be used for adjoint sensitivity analysis as in equation~\eqref{e:adjoint sensitivity}.
The next question is to show that $\av$ is the unique adjoint shadowing direction as in definition~\ref{def:adjoint shadowing direction}.

\section{Existence and uniqueness of adjoint shadowing directions}

When designing algorithms, we will not use the definition of $\av$ in equation~\eqref{e:define_av_semiinfi}, 
since its expression involves two seemingly unrelated parts, both having complicated expressions.
Instead, while developing algorithms like NILSAS, we reverse engineer.
That is, we generate a function and check if it has the same properties as the adjoint shadowing direction.
Hence a list of unified and simple properties which uniquely determines the adjoint shadowing direction is important for algorithm design.
In this section, we first check that our candidate, $\av$, satisfies the four properties listed in definition~\ref{def:adjoint shadowing direction},
thus showing the existence of adjoint shadowing directions.
Then, we show uniqueness.

\subsection{Proving the first property}

It is straight forward to derive a candidate formula as we did,
but it is not obvious the summation of two parts, $\av^\pm$ and $\av^0$, should have a unified relation as given in the first property.
In this subsection,
we prove the first property by showing that both $\av^\pm$ and $\av^0$ solve inhomogeneous adjoint equations, the sum of whose right-hand-sides is $-J_u$.

\begin{lemma} \label{l:dvpm dt}
  $\av^\pm$ defined in equation~\eqref{e:define_avpm_semiinfi} solves the inhomogeneous adjoint equation:
\begin{equation}
  \dd {\av^\pm}\tau + f_u^T \av^\pm = - \ap ^\pm J_u
\end{equation}
\end{lemma}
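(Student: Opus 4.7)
The plan is to differentiate $\av^\pm$ directly from its integral representation in equation~\eqref{e:define_avpm_semiinfi}, applying the Leibniz integral rule to each of the two pieces separately. Writing
\begin{equation*}
\av^\pm(\tau) = \int_\tau^\infty \adttau \ap^-(t) J_u(t) \, dt \;-\; \int_0^\tau \adttau \ap^+(t) J_u(t) \, dt,
\end{equation*}
the variable $\tau$ enters in two ways: as an integration limit, and as a subscript inside the adjoint flow operator $\adttau$. I would peel off these two contributions and treat them independently.

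First I would compute the boundary contributions. Since $\ad^\tau_\tau = I$, evaluating the integrands at $t=\tau$ simply gives $\ap^-(\tau)J_u(\tau)$ and $\ap^+(\tau)J_u(\tau)$, respectively. The lower-limit differentiation of the first integral contributes $-\ap^-(\tau)J_u(\tau)$, and the upper-limit differentiation of the second (with its overall minus sign) contributes $-\ap^+(\tau)J_u(\tau)$. Adding these and using $\ap^- + \ap^+ = \ap^\pm$ yields the forcing term $-\ap^\pm J_u$.

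Next I would compute the interior contributions using equation~\eqref{e:diff_adjoint}, which gives $\partial_\tau \adttau = -f_u^T(\tau) \adttau$. Pulling the operator $-f_u^T(\tau)$ out in front of both integrals reassembles precisely $-f_u^T(\tau)\av^\pm(\tau)$. Combining with the boundary contributions gives $\dd{\av^\pm}{\tau} = -\ap^\pm J_u - f_u^T \av^\pm$, which is the desired equation.

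The main obstacle is justifying that the Leibniz rule applies to the improper integral $\int_\tau^\infty \adttau \ap^-(t) J_u(t)\,dt$, both for computing $\av^\pm(\tau)$ itself and for differentiating it in $\tau$. The key bound is already available from the estimate used in the preceding lemma: by lemma~\ref{l:bound for ap} and uniform hyperbolicity, $\|\adttau \ap^-(t) J_u(t)\| \le C_\alpha e^{-\lambda(t-\tau)} \|J_u\|_{L^\infty}$, and an analogous bound holds after applying $f_u^T(\tau)$ since $f_u$ is continuous on the compact attractor. These exponentially decaying dominating functions allow both dominated convergence and differentiation under the integral sign, completing the argument.
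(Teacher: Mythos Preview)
Your proposal is correct and follows essentially the same approach as the paper: differentiate the integral representation in equation~\eqref{e:define_avpm_semiinfi} via the Leibniz rule, using $\ad^\tau_\tau = I$ for the boundary terms and equation~\eqref{e:diff_adjoint} for the interior terms. Your added justification of differentiation under the improper integral sign via the exponential bound is a nice touch that the paper omits.
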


\begin{proof}
  Differentiate equation~\eqref{e:define_avpm_semiinfi} with respect to $\tau$, 
  using equation~\eqref{e:diff_adjoint} and the fact that $\ad ^\tau _\tau$ is the identity map, we have
  \begin{equation} \begin{split}
    \dd {\av^\pm(\tau)} \tau =& \dd{}\tau \int_\tau^\infty \adttau \ap^-(t) J_u(t) dt - \dd{}\tau \int_0^\tau \adttau \ap^+(t) J_u(t) dt \, \\
    =& - \ad _\tau ^\tau \ap ^- (\tau) J_u(\tau)
    +  \int_\tau^\infty - f_u^T(\tau) \adttau \ap^-(t) J_u(t) dt\\
    &- \ad _\tau ^\tau \ap ^+ (\tau) J_u(\tau)
    - \int_0^\tau -f_u^T(\tau) \adttau \ap^+(t) J_u(t) dt \\
    =& -\ap^\pm(\tau) J_u(\tau) - f_u^T(\tau)\left[ \int_\tau^\infty \adttau \ap^-(t) J_u(t) dt - \int_0^\tau \adttau \ap^+(t) J_u(t) dt\right]\\
    =&  -\ap^\pm(\tau) J_u(\tau) - f_u^T(\tau) \av^\pm(\tau) \,,
  \end{split}
  \end{equation}
\end{proof}

\begin{lemma} \label{l:dv0 dt}
  $\av^0$ defined in equation (\ref{e:define_av0}) solves the inhomogeneous adjoint equation:
  \begin{equation}
    \dd {\av^0}\tau + f_u^T \av^0 = - \ap^0 J_u
  \end{equation}
\end{lemma}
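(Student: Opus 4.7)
The plan is to work directly with the explicit closed form $\av^0 = -\tilde{J}\,\ay/\ip{\ay,f}$ given in equation~\eqref{e:define_av0} and differentiate it termwise. This sidesteps the integral representation used for $\av^\pm$ and reduces the lemma to verifying three elementary calculus facts plus one invocation of lemma~\ref{l:ap0_from_ay}.

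First I would establish that the scalar $\ip{\ay, f}$ is constant along the trajectory. The neutral adjoint CLV $\ay$ is a homogeneous adjoint solution, so $d\ay/d\tau = -f_u^T \ay$ by equation~\eqref{e:homogeneous adjoint}, while the trajectory direction $f(u(\tau),s)$ satisfies the tangent equation $df/d\tau = f_u f$ by the chain rule applied to~\eqref{e:primal_system}. Differentiating the inner product gives
\begin{equation*}
\dd{}{\tau}\ip{\ay,f} \;=\; \ip{-f_u^T\ay,\, f} + \ip{\ay,\, f_u f} \;=\; 0,
\end{equation*}
so this scalar may be pulled through $d/d\tau$. Since $\avg{J}$ is a constant and $u$ solves~\eqref{e:primal_system}, the chain rule likewise gives $d\tilde{J}/d\tau = \ip{J_u, f}$.

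Next I would apply the quotient/product rule to $\av^0$, using the two derivatives just computed and factoring out a copy of $\av^0$:
\begin{equation*}
\dd{\av^0}{\tau} \;=\; -\frac{1}{\ip{\ay,f}}\left[\,\ip{J_u, f}\,\ay \,+\, \tilde{J}\,(-f_u^T \ay)\,\right] \;=\; -\frac{\ip{J_u, f}}{\ip{\ay,f}}\,\ay \;-\; f_u^T \av^0,
\end{equation*}
where in the second term I recognized $-\tilde{J}\,\ay/\ip{\ay,f} = \av^0$. Rearranging yields $d\av^0/d\tau + f_u^T \av^0 = -\ip{J_u,f}\,\ay/\ip{\ay,f}$, and by the formula $\ap^0 v = \ip{v,f}\,\ay/\ip{\ay,f}$ stated after definition~\ref{d:adjoint projection} (lemma~\ref{l:ap0_from_ay}) applied to $v = J_u$, the right-hand side is exactly $-\ap^0 J_u$, as required.

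The only potentially subtle step is the constancy of $\ip{\ay, f}$, and even that collapses to a one-line cancellation once the adjoint and tangent equations for $\ay$ and $f$ are written down. One also tacitly needs $\ip{\ay, f}$ to be nonzero for the formula for $\av^0$ to be well-defined, but this is already built into the non-degeneracy of the neutral splitting used in lemma~\ref{l:ap0_from_ay}, so no new work is required here.
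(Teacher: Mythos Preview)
Your proposal is correct and follows essentially the same route as the paper's proof: differentiate the closed form $\av^0=-\tilde{J}\,\ay/\ip{\ay,f}$ using the constancy of $\ip{\ay,f}$, the identity $d\tilde{J}/d\tau=\ip{J_u,f}$, and then invoke lemma~\ref{l:ap0_from_ay}. The only cosmetic difference is that the paper cites lemma~\ref{l:adjoint_pair_operators_products} for the constancy of $\ip{\ay,f}$, whereas you reprove that special case in one line; the arguments are otherwise identical.
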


\begin{proof}
  By lemma~\ref{l:adjoint_pair_operators_products} and that $f$ is a neutral tangent CLV, $\ip{\ay, f}$ is a constant.
  Now differentiate with respect to $\tau$,
  using the fact that $d\tilde{J}/d\tau = dJ/d\tau = \ip{J_u, f}$,  we have 
  \begin{equation} \begin{split}
    \dd {\av^0(\tau)} \tau 
    &= - \dd{}\tau \frac{ \tilde{J}(\tau) \ay(\tau) }{\ip{\ay(\tau),f(\tau)}} 
    = - \frac{1}{\ip{\ay(\tau),f(\tau)}} \dd{}\tau { \tilde{J}(\tau) \ay(\tau) } \\
    &= - \frac{1}{\ip{\ay,f}} \left(\dd{\tilde{J}}\tau \ay + \tilde{J} \dd \ay \tau\right) 
    = - \frac{1}{\ip{\ay,f}} \left(\ip{J_u, f} \ay - \tilde{J} f_u^T \ay \right) \,.
  \end{split} \end{equation}
  Using lemma~\ref{l:ap0_from_ay} and the definition of $\av^0$, we have:
  \begin{equation}
    \dd {\av^0(\tau)} \tau 
    = - \frac{\ip{J_u, f} }{\ip{\ay,f}} \ay + f_u^T \frac{\tilde{J} \ay }{\ip{\ay,f}}  
    = -\ap^0 J_u - f_u^T \av^0 \,
  \end{equation}
\end{proof}

By lemma \ref{l:dvpm dt}, lemma \ref{l:dv0 dt}, $\av = \av^\pm + \av^0$, and that $\ap^0 + \ap ^\pm = I$,
we have shown that $\av$ has the first property of adjoint shadowing directions.
\begin{proposition}
  $\av$ defined in equation~\eqref{e:define_av_semiinfi} solves the inhomogeneous adjoint equation:  
  \begin{equation}
    \dd {\av}\tau + f_u^T \av = - J_u \,.
  \end{equation}
\end{proposition}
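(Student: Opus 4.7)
The plan is to combine the two preceding lemmas via linearity of the adjoint differential operator $d/d\tau + f_u^T$. By definition in equation~\eqref{e:define_av_semiinfi}, $\av = \av^\pm + \av^0$, and both pieces are differentiable functions of $\tau$ (the first by the exponentially convergent integral representation, the second by its explicit algebraic formula in terms of $\ay$). Applying the operator to the sum and distributing gives
\begin{equation*}
\dd{\av}\tau + f_u^T \av = \Bigl(\dd{\av^\pm}\tau + f_u^T \av^\pm\Bigr) + \Bigl(\dd{\av^0}\tau + f_u^T \av^0\Bigr).
\end{equation*}

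The next step is to substitute the right-hand sides supplied by Lemma~\ref{l:dvpm dt} and Lemma~\ref{l:dv0 dt}, which turns the display above into $-\ap^\pm J_u - \ap^0 J_u$. The final step is to invoke the partition of identity $\ap^\pm + \ap^0 = (P^\pm + P^0)^T = I^T = I$, which holds because $P^\pm + P^0 = P^+ + P^- + P^0 = I$ is precisely the stable/unstable/neutral decomposition of $\R^m$ guaranteed by the uniform hyperbolicity assumption of Definition~\ref{d:uniform hyperbolic} and the continuity of CLV directions discussed after it. Summing therefore yields $-J_u$ on the right, which is exactly the claimed inhomogeneous adjoint equation.

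There is no real obstacle here; the proposition is essentially a one-line corollary. All of the substantive analytic work has already been absorbed into the two preceding lemmas: the exponential decay estimates needed to justify differentiation under the semi-infinite integral defining $\av^\pm$, and the use of the identity $\ap^0 v = \ip{v,f}\ay/\ip{\ay,f}$ together with the constancy of $\ip{\ay,f}$ along the trajectory to differentiate $\av^0$. The only thing left to do is add and use completeness of the adjoint projection operators.
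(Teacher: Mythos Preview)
Your proposal is correct and follows exactly the paper's own argument: the paper simply invokes Lemma~\ref{l:dvpm dt}, Lemma~\ref{l:dv0 dt}, the decomposition $\av = \av^\pm + \av^0$, and the identity $\ap^0 + \ap^\pm = I$ to conclude. Your write-up is slightly more verbose but is the same one-line linearity argument.
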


\subsection{Proving the second property}

\begin{proposition}
  $\av$ defined in equation~\eqref{e:define_av_semiinfi} has zero component in the unstable adjoint subspace at $\tau=0$.
\end{proposition}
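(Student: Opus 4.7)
The plan is to split $\av(0) = \av^\pm(0) + \av^0(0)$ and show that $\ap^+(0)$ annihilates each piece separately. Since $\ap^+$ projects onto the unstable adjoint subspace, this suffices to prove the claim.

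First I would handle the neutral piece $\av^0(0)$. By equation~\eqref{e:define_av0}, $\av^0(0)$ is a scalar multiple of $\ay(0)$, which is the neutral adjoint CLV and hence lies in the image of $\ap^0(0)$. Because the operators $\ap^+$, $\ap^-$, $\ap^0$ are transposes of the complementary tangent projections $P^+$, $P^-$, $P^0$, they themselves have pairwise disjoint images and together sum to the identity. In particular, the image of $\ap^0$ is annihilated by $\ap^+$, so $\ap^+(0)\av^0(0) = 0$.

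Next I would handle the hyperbolic piece $\av^\pm(0)$. Setting $\tau = 0$ in equation~\eqref{e:define_avpm_semiinfi}, the second integral disappears because its domain collapses, leaving
\begin{equation*}
  \av^\pm(0) = \int_0^\infty \adto[0][t]\, \ap^-(t)\, J_u(t)\, dt.
\end{equation*}
Applying $\ap^+(0)$ and moving it inside the integral, I would invoke the commutation property $\ap^+(0)\adto[0][t] = \adto[0][t]\ap^+(t)$ from lemma~\ref{l:aDP=aPD}. This yields an integrand of the form $\adto[0][t]\ap^+(t)\ap^-(t) J_u(t)$. Since $P^+$ and $P^-$ project onto disjoint subspaces, $P^-P^+ = 0$, whence $\ap^+\ap^- = (P^-P^+)^T = 0$. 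The entire integrand vanishes pointwise, so $\ap^+(0)\av^\pm(0) = 0$.

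There is no substantial obstacle here: the proof is essentially a bookkeeping exercise that chains together three prepared facts (the complementarity of the adjoint projections, the tangent/adjoint commutation lemma, and the orthogonality $\ap^+\ap^- = 0$). The only mild subtlety is ensuring the integral identities survive the exchange of $\ap^+(0)$ with the improper integral, which follows from the exponential decay estimate already established in the proof of existence of $\av^\pm$, so dominated convergence applies without further work.
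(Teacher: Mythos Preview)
Your proposal is correct and follows essentially the same route as the paper: both split $\av(0)=\av^\pm(0)+\av^0(0)$, note that the second integral in $\av^\pm$ vanishes at $\tau=0$, and use the commutation lemma~\ref{l:aDP=aPD} to place the first integral in the stable adjoint subspace while $\av^0(0)$ sits in the neutral direction. The only cosmetic difference is that the paper pulls $\ap^-(0)$ out front to identify the subspace directly, whereas you apply $\ap^+(0)$ and show it annihilates each piece; your version is arguably cleaner, since the paper's factored integral $\int_0^\infty \ad_t^0 J_u(t)\,dt$ need not converge on its own.
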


\begin{proof}
We can substitute $\tau=0$ into equation~\eqref{e:define_av_semiinfi} to get
\begin{equation} 
  \av (\tau=0)  =  \int_0^\infty \adttau \ap^-(t) J_u(t) dt + \av^0 (\tau)\,.
\end{equation}
By lemma~\ref{l:aDP=aPD} and definition of $\av^0$ in equation~\eqref{e:define_av0}, we have
\begin{equation} 
  \av (0)  =  \ap^-(0) \int_0^\infty \ad_t^0  J_u(t) dt - \frac{ \tilde{J} }{\ip{f,f}} \ap^0 f \,.
\end{equation}
Here the first term is in the stable adjoint subspace, and the second is in the neutral subspace,
hence $\av(0)$ has zero unstable adjoint component.
\end{proof}

\subsection{Proving the third property}

We prove the boundedness of $\av$ by proving the boundedness separately on $\av^\pm$ and $\av^0$.

\begin{lemma}\label{l:bound_avpm}
  $\av^\pm(\tau)$ defined in equation~\eqref{e:define_avpm_semiinfi} is bounded by a constant independent of $\tau$.
\end{lemma}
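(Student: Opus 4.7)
The plan is to split $\av^\pm(\tau)$ into its two integrals from equation~\eqref{e:define_avpm_semiinfi} and bound each separately by a constant independent of $\tau$. The key observation is that the projection in each integrand selects a mode which decays exponentially under $\adttau$ in the relevant time direction: stable adjoint modes decay under backward evolution (for $t>\tau$), while unstable adjoint modes decay under forward evolution (for $t<\tau$). This is exactly what is needed to make both pieces absolutely convergent with a uniform-in-$\tau$ bound.

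For the first integral $\int_\tau^\infty \adttau \ap^-(t) J_u(t)\,dt$, the integrand lies in the stable adjoint subspace and $t\ge\tau$. By lemma~\ref{l:ap_projects_to_adjoint_CLV} the adjoint CLV bound applies with exponent at most $-\lambda$, producing a factor $e^{-\lambda(t-\tau)}$; combined with the uniform bound $\|\ap^-(t)\|\le C_\alpha$ from lemma~\ref{l:bound for ap} and the boundedness of $J_u$ on the compact attractor, this gives an upper bound of $C_\alpha\|J_u\|_{L^\infty}\lambda^{-1}$, independent of $\tau$. This estimate is essentially the one already carried out in equation~\eqref{e:estimate} for the pointwise convergence lemma, so I would simply cite it.

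For the second integral $\int_0^\tau \adttau \ap^+(t) J_u(t)\,dt$, the integrand lies in the unstable adjoint subspace and $t\le\tau$, so $\adttau$ propagates forward in time. I would apply the adjoint CLV inequality with $t_2=t\le t_1=\tau$: since the unstable exponent is at least $\lambda>0$, the factor $e^{\lambda(t-\tau)}$ becomes a decaying exponential in $\tau-t$. With the same uniform bound on $\ap^+$ and on $J_u$, this integral is also bounded by $C_\alpha\|J_u\|_{L^\infty}\lambda^{-1}$. Adding the two bounds yields the desired constant.

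The one subtle point, which I would flag explicitly, is the symmetric use of the adjoint CLV inequality in the second integral: the bound $\|\adto \az(t_2)\|\le Ce^{\lambda(t_2-t_1)}\|\az(t_2)\|$ is usually read with $t_2>t_1$, but here it is invoked with $t_2<t_1$ together with a positive (unstable) exponent, so that backward growth translates into forward decay. Once this observation is in place, the rest is the same exponential-integral calculation as in equation~\eqref{e:estimate}, and there is no genuine obstacle beyond bookkeeping.
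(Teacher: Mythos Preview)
Your proposal is correct and follows essentially the same approach as the paper's own proof: split $\av^\pm(\tau)$ into its two integrals, apply lemma~\ref{l:ap_projects_to_adjoint_CLV} together with uniform hyperbolicity to get the exponential decay factor $e^{-\lambda|t-\tau|}$ in each piece, and use the boundedness of $J_u$ on the compact attractor to conclude. The only cosmetic difference is that you explicitly track the constant $C_\alpha$ from lemma~\ref{l:bound for ap}, whereas the paper absorbs all constants into the final bound $\frac{2}{\lambda}\|J_u\|_{L^\infty}$; your flagged ``subtle point'' about the direction of the adjoint CLV inequality for the unstable piece is valid and handled implicitly in the paper.
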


\begin{proof}
  Since $J_u$ is bounded, 
  by lemma \ref{l:ap_projects_to_adjoint_CLV} and uniform hyperbolicity, we have,
  \begin{equation} \begin{split}
    \|\av^\pm(\tau)\| =& \left \| \int_\tau^\infty \adttau \ap^-(t) J_u(t) dt - \int_0^\tau \adttau \ap^+(t) J_u(t) dt \right \| \\
    \le & \int_\tau^\infty \left \| \adttau \ap^-(t) J_u(t)\right \| dt 
    + \int_0^\tau \left \| \adttau \ap^+(t) J_u(t)\right \| dt\\
    \le &\int_\tau^\infty e^{-\lambda(t-\tau)} \left \| J_u(t)\right \| dt 
    + \int_0^\tau e^{\lambda(t-\tau)} \left \|J_u(t)\right \| dt\\
    \le & \left \|J_u\right\|_{L^\infty} 2 \int_0^\infty e^{-\lambda(t)dt} \, 
    = \frac 2\lambda \left \|J_u\right\|_{L^\infty} ,
  \end{split} \end{equation}
  where $\lambda$ is from the definition of uniform hyperbolicity.
\end{proof}

\begin{lemma}\label{l:bound_av0}
  $\av^0(\tau)$ defined in equation~\eqref{e:define_av0} is bounded by a constant independent of $\tau$.
\end{lemma}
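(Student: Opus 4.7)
The proof plan is direct: substitute the explicit formula $\av^0 = -\tilde{J}\ay / \ip{\ay, f}$ and bound the numerator and denominator separately. The boundedness then reduces to three ingredients already established in the paper.

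First, I would bound the numerator $\tilde{J}(\tau) \ay(\tau)$. Since $J$ is continuous and the attractor is compact, $J$ is bounded on the trajectory, so $\tilde{J} = J - \avg{J}$ is bounded by some constant independent of $\tau$. The factor $\ay$ is the neutral adjoint CLV, which by the statement in the definition of adjoint Lyapunov exponents and vectors (in section~\ref{s:adjoint flow}) is bounded by a constant independent of $\tau$. Multiplying gives a uniform bound on $\|\tilde{J}(\tau)\ay(\tau)\|$.

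Second, I would show the denominator $\ip{\ay(\tau), f(\tau)}$ is a nonzero constant in $\tau$, so that $1/\ip{\ay, f}$ is a finite constant. Constancy was already observed in the proof of lemma~\ref{l:dv0 dt} (it follows from lemma~\ref{l:adjoint_pair_operators_products} together with the fact that $\ay$ solves the homogeneous adjoint equation and $f$ solves the homogeneous tangent equation, so $\ip{\ay(\tau), f(\tau)} = \ip{\ay(0), f(0)}$). The main step is to argue $\ip{\ay, f} \ne 0$. This is immediate from lemma~\ref{l:ap0_from_ay}: the formula $\ap^0 v = \ip{v,f}\ay/\ip{\ay,f}$ presupposes $\ip{\ay,f}\neq 0$, and indeed $\ap^0$ is a nontrivial projection onto the one-dimensional neutral adjoint direction spanned by $\ay$, while $f$ spans the neutral tangent direction; by lemma~\ref{l:aP_P_orthogonal}, $\ay$ is orthogonal to all non-neutral tangent CLVs, so if it were also orthogonal to $f$ it would be orthogonal to a basis of $\R^m$, forcing $\ay = 0$, a contradiction.

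Combining the two estimates, $\|\av^0(\tau)\| \le \|\tilde{J}\|_{L^\infty} \cdot \|\ay\|_{L^\infty} / |\ip{\ay, f}|$, which is a constant independent of $\tau$. I do not anticipate any serious obstacle; the only substantive point is the non-degeneracy $\ip{\ay,f}\ne 0$, and this has already been implicitly used in section~\ref{s:adjoint flow} through lemma~\ref{l:ap0_from_ay}, so it can be cited rather than reproved.
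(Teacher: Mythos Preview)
Your proposal is correct and follows essentially the same route as the paper: bound $\tilde{J}$ by compactness, bound $\ay$ as the neutral adjoint CLV, and use that $\ip{\ay,f}$ is a nonzero constant (via lemma~\ref{l:adjoint_pair_operators_products}). If anything, you are slightly more careful than the paper, which does not explicitly justify $\ip{\ay,f}\ne 0$ in its proof of this lemma.
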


\begin{proof}
  By lemma~\ref{l:adjoint_pair_operators_products} and that $f$ is a neutral tangent CLV,
  $\ip{\ay, f}$ is a constant.
  By lemma~\ref{l:ap_projects_to_adjoint_CLV} and that $\ay$ is the neutral adjoint CLV with zero exponent, 
  $\ay$ is bounded by a constant independent of $T$.
  Moreover, a continuous $\tilde{J}$ is bounded on a compact attractor,
  hence $\av^0=\tilde{J}\ay/\ip{\ay, f}$ is bounded.
\end{proof}

By lemma~\ref{l:bound_avpm}, lemma \ref{l:bound_av0} and that $\av = \av^\pm + \av^0$, 
we have shown that $\av$ has the third property of adjoint shadowing directions.
\begin{proposition}
  $\av(\tau)$ defined in equation~\eqref{e:define_av_semiinfi} is bounded by a constant independent of $\tau$.
\end{proposition}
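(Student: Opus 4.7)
The plan is to derive this proposition as an immediate corollary of the two lemmas just established. By definition~\eqref{e:define_av_semiinfi}, we have the pointwise decomposition $\av(\tau) = \av^\pm(\tau) + \av^0(\tau)$, so the triangle inequality gives $\|\av(\tau)\| \le \|\av^\pm(\tau)\| + \|\av^0(\tau)\|$ for every $\tau \ge 0$.

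The first step is to invoke lemma~\ref{l:bound_avpm}, which supplies a constant $C_1$ (in fact $C_1 = 2\|J_u\|_{L^\infty}/\lambda$, coming from the uniform-hyperbolic decay of $\ap^\pm$ applied to the bounded quantity $J_u$) such that $\|\av^\pm(\tau)\| \le C_1$ uniformly in $\tau$. The second step is to invoke lemma~\ref{l:bound_av0}, which supplies a constant $C_2$ (coming from the boundedness of the neutral adjoint CLV $\ay$, the constancy of $\ip{\ay,f}$, and the boundedness of $\tilde J$ on the compact attractor) such that $\|\av^0(\tau)\| \le C_2$ uniformly in $\tau$.

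Combining these, $\|\av(\tau)\| \le C_1 + C_2$ for all $\tau \ge 0$, which is the desired bound. There is no real obstacle here: the conceptual and estimation work has already been absorbed into the two preceding lemmas, whose proofs do the heavy lifting (exponential decay of stable adjoint propagation for the $\av^\pm$ part, and the explicit algebraic expression $\av^0 = -\tilde J\,\ay/\ip{\ay,f}$ together with boundedness of $\ay$ for the $\av^0$ part). The only thing to verify in the write-up is that the decomposition, the triangle inequality, and the appeals to the two lemmas are stated cleanly, so that the third property of definition~\ref{def:adjoint shadowing direction} is established.
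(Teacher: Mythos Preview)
Your proposal is correct and follows essentially the same approach as the paper: the paper also simply invokes lemma~\ref{l:bound_avpm} and lemma~\ref{l:bound_av0} together with the decomposition $\av = \av^\pm + \av^0$ to conclude the bound. Your write-up is slightly more explicit (triangle inequality, naming the constants), but the argument is identical.
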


\subsection{Proving the fourth property}

\begin{proposition}
  $\ip{\av, f}_{avg} = 0$.
\end{proposition}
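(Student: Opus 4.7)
The plan is to split $\bar v = \bar v^{\pm} + \bar v^{0}$ and handle the two pieces separately, showing that the first contributes zero pointwise and the second contributes zero only in the time average.

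First I would argue that $\langle \bar v^{\pm}(\tau), f(\tau)\rangle = 0$ for every $\tau$. The key is that by lemma~\ref{l:aDP=aPD}, the adjoint flow operator commutes with the adjoint projections, so
\begin{equation}
\bar v^{\pm}(\tau) = \bar P^{-}(\tau)\!\int_\tau^\infty \adttau J_u(t)\,dt \;-\; \bar P^{+}(\tau)\!\int_0^\tau \adttau J_u(t)\,dt,
\end{equation}
which lies in the image of $\bar P^{+}(\tau) + \bar P^{-}(\tau)$. Since $f(\tau)$ is a neutral tangent CLV, it lies in the image of $P^{0}(\tau)$, and lemma~\ref{l:aP_P_orthogonal} then gives orthogonality of the image of $\bar P^{+}$ and of $\bar P^{-}$ with the image of $P^{0}$. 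Hence $\langle \bar v^{\pm}(\tau), f(\tau)\rangle = 0$, and so the $\bar v^{\pm}$ contribution to the time average is zero trivially.

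Next I would compute $\langle \bar v^{0}, f\rangle$ directly from its definition in equation~\eqref{e:define_av0}:
\begin{equation}
\langle \bar v^{0}(\tau), f(\tau)\rangle = -\frac{\tilde J(\tau)}{\langle \bar y(\tau), f(\tau)\rangle}\,\langle \bar y(\tau), f(\tau)\rangle = -\tilde J(\tau).
\end{equation}
Taking the Cesàro average and using $\tilde J = J - \avg{J}$, we get
\begin{equation}
\langle \bar v^{0}, f\rangle_{avg} = -\lim_{T\to\infty}\frac{1}{T}\int_0^T \tilde J\,dt = -(\avg{J} - \avg{J}) = 0.
\end{equation}
Adding the two contributions gives $\langle \bar v, f\rangle_{avg} = 0$.

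There is essentially no obstacle here: the property was built into $\bar v^{0}$ by design, precisely to cancel the time-averaged component of $\bar v$ along $f$ that would otherwise spoil uniqueness. The only subtle piece is invoking lemma~\ref{l:aP_P_orthogonal} to dispose of the $\bar v^{\pm}$ term pointwise rather than only in average, but that is immediate from the CLV decomposition together with $f$ being neutral.
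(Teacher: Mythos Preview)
Your approach is the same as the paper's: split $\av = \av^\pm + \av^0$, show $\langle \av^\pm(\tau), f(\tau)\rangle = 0$ pointwise via lemma~\ref{l:aP_P_orthogonal}, then show $\langle \av^0, f\rangle_{avg} = -\tilde J_{avg} = 0$.

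There is one technical slip. When you commute $\ap^-$ past $\adttau$ and pull it entirely outside, you write $\ap^-(\tau)\int_\tau^\infty \adttau J_u(t)\,dt$, but this integral diverges: $\adttau J_u(t)$ generally has unstable adjoint components, which grow exponentially as $t\to\infty$ (adjoint unstable directions grow when propagated backward in time). The paper avoids this by first inserting $\ap^-(t)\ap^-(t) = \ap^-(t)$ and commuting only one factor, obtaining
\[
\av^\pm(\tau) = \ap^-(\tau)\int_\tau^\infty \adttau \ap^-(t) J_u(t)\,dt \;-\; \ap^+(\tau)\int_0^\tau \adttau J_u(t)\,dt,
\]
where the retained $\ap^-(t)$ inside the first integral guarantees convergence while still exhibiting $\av^\pm(\tau)$ as lying in the image of $\ap^-(\tau) + \ap^+(\tau)$. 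With that fix your argument is complete and coincides with the paper's.
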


\begin{proof}

  We prove by first showing that $\av^\pm(\tau)$ is perpendicular to $f(\tau)$ for any $\tau$, and then $\ip{\av^0, f}_{avg} = 0$.
  By that $\ap^-(t)$ is a projection operator so hence $\ap^-(t)\ap^-(t)=\ap^-(t)$,
  and by lemma~\ref{l:aDP=aPD},
  equation~\eqref{e:define_avpm_semiinfi} can be written as:
  \begin{equation} \begin{split}
    \av^\pm(\tau) &=  \int_\tau^\infty \adttau \ap^-(t)\ap^-(t)  J_u(t) dt -  \int_0^\tau \adttau \ap^+(t) J_u(t) dt \\
                  &= \ap^-(\tau) \int_\tau^\infty \adttau  \ap^-(t) J_u(t) dt - \ap^+(\tau) \int_0^\tau \adttau  J_u(t) dt \,,
  \end{split} \end{equation}
  where the projection  $\ap^-(t)$ inside the first integration is for the convergence of the integration.
  Since  $f$ is the direction $P^0$ projects to, by lemma~\ref{l:aP_P_orthogonal}, we see $\av^\pm(\tau)$ is perpendicular to $f(\tau)$.
  For $\av^0$, using the definition of $\tilde{J}$, we have
  \begin{equation}
    \ip{\av^0, f}_{avg}
    = \ip{ - \frac{ \tilde{J} }{\ip{\ay,f}} \ay, f }_{avg}
    =  - \tilde{J}_{avg}
    = 0
  \end{equation}
\end{proof}

\subsection{Proving uniqueness}

\begin{proposition}
  On a trajectory on the attractor, the adjoint shadowing direction is unique.
\end{proposition}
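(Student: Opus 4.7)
Suppose $\av_1$ and $\av_2$ both satisfy the four defining properties of the adjoint shadowing direction, and set $\aw := \av_1 - \av_2$. Since the inhomogeneous equation in property~1 is linear with the same right-hand side for both, $\aw$ is a homogeneous adjoint solution, so $\aw(\tau)=\adttau \aw(t)$ for all $\tau,t\ge 0$. The whole problem reduces to showing that the only homogeneous adjoint solution $\aw$ that is bounded on $\R_+$, has zero unstable component at $\tau=0$, and satisfies $\ip{\aw,f}_{avg}=0$ is $\aw\equiv 0$.

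The plan is to split $\aw$ along the adjoint CLV decomposition and eliminate each component in turn. Using lemma~\ref{l:aDP=aPD} (the adjoint flow commutes with each $\ap^\bullet$) applied to $\aw(\tau)=\adto\aw(t_2)$, each of $\ap^+\aw$, $\ap^0\aw$, $\ap^-\aw$ is itself a homogeneous adjoint solution that lies in its own invariant subspace. From property~2 applied to both $\av_1$ and $\av_2$, we get $\ap^+(0)\aw(0)=0$; invariance of the unstable adjoint subspace then gives $\ap^+(\tau)\aw(\tau)=0$ for every $\tau\ge 0$. Next, by uniform hyperbolicity the angle between the stable adjoint subspace and the rest is uniformly bounded below on the attractor, so the projectors $\ap^-,\ap^0,\ap^+$ are uniformly bounded operators; boundedness of $\aw$ therefore forces boundedness of $\ap^-(\tau)\aw(\tau)$ on $\R_+$. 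But a nonzero vector in the stable adjoint subspace grows exponentially forward in time under the adjoint flow (this is the content of lemma~\ref{l:ap_projects_to_adjoint_CLV} together with the two-sided exponential bound for CLVs), so $\ap^-(\tau)\aw(\tau)\equiv 0$ as well.

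It remains to handle the neutral component. By the previous step, $\aw(\tau)=\ap^0(\tau)\aw(\tau)$, which by lemma~\ref{l:ap0_from_ay} takes the form $\aw(\tau)=c(\tau)\ay(\tau)$ for a scalar function $c$. Since $\aw$ and $\ay$ are both homogeneous adjoint solutions and the neutral adjoint subspace is one-dimensional, differentiation shows $c$ is constant, so $\aw=a\,\ay$ for some $a\in\R$. Finally, property~4 gives
\begin{equation}
0 = \ip{\aw,f}_{avg} = a\,\ip{\ay,f}_{avg} = a\,\ip{\ay,f},
\end{equation}
where the last equality uses lemma~\ref{l:adjoint_pair_operators_products} and the fact that $f$ is a neutral tangent CLV, making $\ip{\ay,f}$ a nonzero constant (nonzero because $\ay$ is the neutral adjoint CLV, not orthogonal to its paired tangent direction). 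Hence $a=0$, so $\aw\equiv 0$ and $\av_1=\av_2$.

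The main obstacle is the killing of the stable component in the middle step: one must argue that boundedness of the full $\aw$ transfers to boundedness of its stable projection, which is where uniform hyperbolicity (bounded angles between CLV subspaces on the compact attractor) is essential. Without this, the stable part could a~priori be unbounded even though the sum with the neutral part is bounded. Once this obstacle is cleared, the rest is a direct application of the lemmas established in section~\ref{s:adjoint flow} together with the defining properties.
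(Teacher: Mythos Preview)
Your proposal is correct and follows essentially the same approach as the paper: take the difference of two adjoint shadowing directions, note it is a homogeneous adjoint solution, then use properties~2, 3, and 4 to kill the unstable, stable, and neutral adjoint components in turn. Your version is somewhat more explicit in invoking the projector lemmas (e.g., uniform boundedness of $\ap^-$ to transfer boundedness of $\aw$ to $\ap^-\aw$), but the structure and the key ideas are identical to the paper's proof.
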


\begin{proof}
  We have shown that $\av$ is an adjoint shadowing direction.
  Assume that there is another $\av'$ satisfying definition~\ref{def:adjoint shadowing direction},
  then by the first property, $\av'$ solves the same inhomogeneous adjoint equation~\eqref{e:av solve inhomo}.
  Hence $\av' -\av$ solves the homogeneous adjoint equation~\eqref{e:homogeneous adjoint}.

  Now by the second property, $\av'(0)$ also has zero unstable adjoint components,
  hence the unstable adjoint components in $\av'-\av$ at $\tau=0$, and hence for all $\tau\ge 0$, is zero.

  Moreover, if $\av' -\av$ has a stable adjoint component at $\tau=0$, this difference will grow exponentially as $\tau$ grows larger.
  In order for both $\av$ and $\av'$ to have the boundedness required by the third property, the stable component in $\av'-\av$ must be zero.

  We still need to show the neutral adjoint component in $\av'-\av$ is zero.
  Assume $\av'-\av = C\ay$ for some constant $C$.
  By hyperbolicity, $\ay$ is at least angle $\alpha$ away from the non-neutral adjoint subspace, which is also the subspace perpendicular to $f$.
  Hence $\ip{\ay, f}\ne 0$ at any time.
  Moreover, by lemma~\ref{l:adjoint_pair_operators_products},  $\ip{\ay, f}$ is a constant.
  Hence $\avg{\ip{\ay,f}} \ne 0$.
  In order for both $\av$ and $\av'$ to have the fourth property, we must have $C=0$.
  We have thus concluded $\av=\av'$.
\end{proof}

Intuitively, the first property demands possible candidates for adjoint shadowing directions be in the affine space
of a particular inhomogeneous adjoint solution plus a linear combination of adjoint CLVs. 
Then the second, third, and fourth property prescribe coefficients for the adjoint unstable, stable, and neutral adjoint subspaces.
We have thus uniquely determined our adjoint shadowing direction.

\section{Adjoint shadowing directions for hyperbolic diffeomorphisms} \label{s:diffeo}

In this section we discuss adjoint shadowing directions of discrete dynamical systems given by hyperbolic diffeomorphisms.
Because the absence of the neutral subspace, the discussion about diffeomorphisms will be easier than flows.
However, managing subscripts here calls for more caution.
This structure of this section will be a miniature of the entire paper,
and notations here are the same as in the case of flows unless otherwise noted.

\subsection{Preparations}

The homogeneous tangent diffeomorphism is: 
\begin{equation}\label{e:homo_tangent_diffeo}
  w_{i+1} = f_{ui} w_i \,.
\end{equation}
where the second subscript of $f_{ui}$ indicate where the partial derivative is evaluated, that is,
$f_{ui}:=$ $\partial f/ \partial u (u_i,s)$.
The inhomogeneous tangent diffeomorphisms have an additional right hand term independent of $v_i$, 
and the particular inhomogeneous tangent diffeomorphism we will be using is:
\begin{equation}\label{e:inhomo_tangent_diffeo}
  v_{i+1} = f_{ui} v_i + f_{si} \,.
\end{equation} 
where $f_{si}:=$ $\partial f/ \partial s (u_i,s)$.
The propagation operator $D_l^i$ is defined as the matrix that maps a homogeneous tangent solution at step $l$ to step $i$.
More specifically,
\begin{equation} \begin{split}
  D_l^i := 
  \begin{cases}
    I, \quad \textnormal{ when } i=l; \\
    f_{u,i-1} f_{u,i-2} \cdots f_{u,l+1}f_{u,l}, \quad \textnormal{ when } i>l; \\
    (f^{-1})_{u,i+1}\cdots (f^{-1})_{u,l}
      = f^{-1}_{u,i} f^{-1}_{u,i+1} \cdots f^{-1}_{u,l-2} f^{-1}_{u,l-1}, \quad \textnormal{ when }i<l. 
  \end{cases}
\end{split} \end{equation}
A tangent CLV with exponent $\lambda$ is a homogeneous tangent solution $\{\zeta_i\}^\infty_{i=0}$ such that there is constant $C$,
for any integer $i_1, i_2$, $\| \zeta_{i_2} \| \le C e^{\lambda(i_2-i_1)}\| \zeta_{i_1} \|$. 
The uniform hyperbolicity is defined as follows.

\begin{definition}[uniform hyperbolic diffeomorphisms]
  In this paper, a diffeomorphism is said to be uniform-hyperbolic if 
  there are $C\in(0,\infty)$ and $\lambda>0$, 
  such that for all $u$ on the attractor, there is a splitting of the tangent space $T_u=\R^m$ into
  $T_u = V^+(u)\oplus V^-(u)$, 
  and on the trajectory passing $u$ at step $0$, we have:
  \begin{enumerate}
    \item for any $v\in V^+(u)$ and $i\le -1$, $\|D_0^i v\|\le C e^{ -\lambda |i|} \|v\|$;
    \item for any $v\in V^-(u)$ and $i\ge 1$,  $\|D_0^i v\|\le C e^{ -\lambda |i|} \|v\|$.
  \end{enumerate}
\end{definition}

Different from the flow case, there is no neutral subspace.
Similar to the flow case, there are in total $m$ different tangent CLVs,
and with uniform hyperbolicity, we can show that the absolute value of all LEs are greater than $\lambda$.
We can also show that the angle between any CLV and the span of the rest is larger than some $\alpha>0$.
Hence the CLVs form a basis of $\R^m$.
Similar to definition~\ref{def:tangent P}, we can define three projection operators, $P^j_i, P^+_i, P^-_i$, 
projecting to the $j$-th CLV, the unstable tangent subspace, and the stable tangent subspace at step $i$.
Similar to lemma~\ref{l:DP=PD}, we can show that $D_l^i P_l = P_i D_l^i$.
Similar to lemma~\ref{l:Ca}, we can show there is $C_\alpha>0$ such that $\|Pv\|\le C_\alpha \|v\|$ for all three projection operators.

The homogeneous adjoint diffeomorphism is:
\begin{equation} \label{e:homo_adjoint_diffeo}
  \aw_{l} = f_{ul}^T \aw_{l+1} \,,
\end{equation}
where $\cdot^T$ is the matrix transpose.
The particular inhomogeneous adjoint diffeomorphism we will be using is:
\begin{equation} \label{e:inhomo_adjoint_diffeo}
  \av_{l} = f_{ul}^T \av_{l+1} + J_{ul} \,,
\end{equation}
where  $J_{ui} :=$ $ \partial J/ \partial u (u_i,s)$.
The adjoint propagation operator $\ad_i^l$ is defined as the matrix that maps a homogeneous adjoint solution at step $i$ to step $l$:
\begin{equation} \begin{split}
  \ad_i^l := 
  \begin{cases}
    I, \quad \textnormal{ when } i=l; \\
    f_{u,l}^T f_{u,l+1}^T \cdots f_{u,i-2}^T f_{u,i-1}^T, \quad \textnormal{ when } i>l; \\
    f^{-T}_{u,l-1} f^{-T}_{u,l-2} \cdots f^{-T}_{u,i+1} f^{-T}_{u,i}, \quad \textnormal{ when }i<l. 
  \end{cases}
\end{split} \end{equation}
A direction computation shows that similar to lemma~\ref{l:adjoint_pair_operators_products}, $(D_l^i)^T = \ad_i^l$.

An adjoint CLV with exponent $\lambda$ is a homogeneous adjoint solution $\{\az_i\}_{i=0}^\infty$ such that there is constant $C$,
for any integer $i_1, i_2$, $\| \az_{i_1} \| \le C e^{\lambda(i_2-i_1)}\| \az_{i_2} \|$. 
We define adjoint projection operators as the transpose of tangent projection operators, that is, $\ap := P^T$.
Similar to lemma~\ref{l:ap_projects_to_adjoint_CLV}, we can show that $\ap^j$ projects to the adjoint CLV with exponent $\lambda_j$, 
which is also the exponent of the $j$-th tangent CLV. 
Similar to lemma~\ref{l:bound for ap}, we can show there is $C_\alpha>0$ such that $\|\ap v\|\le C_\alpha \|v\|$ for all three adjoint projection operators.

\subsection{A candidate formula}

While proving the convergence of the Least Squares Shadowing method for hyperbolic diffeomorphisms \cite{wang2014convergence},
Wang also proved that, under the same assumption of theorem \ref{thm:main theorem for diffeomorphisms}, we have 
\begin{equation} \begin{split}\label{e:djds_lss_diffeo}
  \dd {\avg{J}} {s} &= \lim_{N\rightarrow \infty} \frac 1N \summation \left(\ip{J_{ui}, v_{Ni}} + J_{si} \right) \,, \textnormal{ where } \\
  v_{Ni} &= \sum_{l=0}^{i-1}D^i_{l+1}P_{l+1}^-f_{sl} - \sum_{l=i}^{N-1} D_{l+1}^i P_{l+1}^+ f_{sl} \,.
\end{split} \end{equation}
Using the fact that $f_{ui} D^i_{l+1} = D^{i+1}_{l+1}$, 
we can check that $v_{Ni}$ solves the inhomogeneous tangent equation~\eqref{e:inhomo_tangent_diffeo}.
Pilyugin~\cite{Pilyugin_shadow_linear_formula,Pilyugin1999book} defined a sequence $\{v_i\}_{i=0}^\infty$, with
$v_i:=\lim_{N\rightarrow \infty}v_{Ni}$, which we here call the shadowing direction.
Pilyugin also shows the shadowing direction is bounded for hyperbolic diffeomorphisms.

Similar to section~\ref{s:candidate formula}, we derive a $\av$ satisfying the adjoint sensitivity formula in equation~\eqref{e:djds_adjoint_diffeo}.
To achieve this, we separate the dependency on $f_s$ in equation~\eqref{e:djds_lss_diffeo} as follows.
\begin{equation} \begin{split}
  \summation \ip{J_{ui}, v_{Ni}}  
  &= \summation \ip{J_{ui}, \sum_{l=0}^{i-1}D^i_{l+1}P_{l+1}^-f_{sl} - \sum_{l=i}^{N-1} D_{l+1}^i P_{l+1}^+ f_{sl}}\\
  &= \summation \sum_{l=0}^{i-1} \ip{J_{ui}, D^i_{l+1}P_{l+1}^-f_{sl}} -\summation \sum_{l=i}^{N-1} \ip{J_{ui}, D_{l+1}^i P_{l+1}^+ f_{sl}}\\
  &= \sum_{l=0}^{N-1} \sum_{i=l+1}^{N-1} \ip{J_{ui}, P_{i}^- D^i_{l+1}f_{sl}} - \sum_{l=0}^{N-1} \sum_{i=0}^{l}\ip{J_{ui},  P_{i}^+ D_{l+1}^i f_{sl}}\\
  &= \sum_{l=0}^{N-1} \sum_{i=l+1}^{N-1} \ip{\ad_i^{l+1} \ap_{i}^-J_{ui},  f_{sl}} - \sum_{l=0}^{N-1} \sum_{i=0}^{l}\ip{ \ad^{l+1}_i \ap_{i}^+J_{ui}, f_{sl}}\\
  &= \sum_{l=0}^{N-1}  \ip{ \sum_{i=l+1}^{N-1} \ad_i^{l+1} \ap_{i}^-J_{ui}  -  \sum_{i=0}^{l} \ad^{l+1}_i \ap_{i}^+J_{ui}, f_{sl}} \,.
 \end{split} \end{equation}
We define a sequence $\{\av_{Nl}\}_{l=0}^{N}$ as
\begin{equation}
  \av_{Nl} = \sum_{i=l}^{N-1} \ad_i^{l} \ap_{i}^-J_{ui}  -  \sum_{i=0}^{l-1} \ad^{l}_i \ap_{i}^+J_{ui} \,.
\end{equation}
In our summation symbols, when the lower bound is strictly larger than the upper bound, that summation is zero.
In particular, when $l=0$, $\av_{N0} = \sum_{i=0}^{N-1} \ad_i^{0} \ap_{i}^-J_{ui}$.
Hence we have proved that
\begin{equation} \label{e:sum equivalence diffeo}
  \summation \ip{J_{ui}, v_{Ni}} = \sum_{l=0}^{N-1} \ip{\av_{N,l+1}, f_{sl}} \,.
\end{equation}
Notice that in the summation, the subscripts of $v_{Ni}$ runs from $i=0$ to $N-1$, whereas the subscripts of $\av_{Nl}$ runs from $l=1$ to $N$.

Similar to section~\ref{s:extend to semi infinite}, we can define a semi-infinite sequence $\{\av_{l}\}_{l=0}^{\infty}$ as
\begin{equation}
  \av_l := \lim_{N\rightarrow\infty}\av_{Nl} = \sum_{i=l}^{\infty} \ad_i^{l} \ap_{i}^-J_{ui}  -  \sum_{i=0}^{l-1} \ad^{l}_i \ap_{i}^+J_{ui} \,.
\end{equation}
using the same method as in section~\ref{s:extend to semi infinite}, we can show that above limit exists, 
and we have the adjoint sensitivity formula in equation~\eqref{e:djds_adjoint_diffeo}.
$\{\av_{l}\}_{l=0}^{\infty}$ is our candidate for adjoint shadowing direction of hyperbolic diffeomorphisms.

\subsection{Existence and uniqueness}

Using the fact that $f_{ul}^T \ad^{l+1}_i = \ad^l_i$, we can show that $\{\av_{l}\}_{l=0}^{\infty}$  solves the inhomogeneous adjoint equation:
\begin{equation}\begin{split}
  \av_l - f_{ul}^T \av_{l+1}
  &= \sum_{i=l}^{\infty} \ad_i^{l} \ap_{i}^-J_{ui}  -  \sum_{i=0}^{l-1} \ad^{l}_i \ap_{i}^+J_{ui}
  - \sum_{i=l+1}^{\infty} \ad_i^{l} \ap_{i}^-J_{ui}  +  \sum_{i=0}^{l} \ad^{l}_i \ap_{i}^+J_{ui}\\
  &= \ad_l^{l} \ap_{l}^-J_{ul} +  \ad^{l}_l \ap_{l}^+J_{ul}
  =\ap_{l}^-J_{ul} + \ap_{l}^+J_{ul} = J_{ul}
\end{split}\end{equation}

By observing the formula of $\av_0$ we find it has no component in the adjoint unstable directions, thus satisfying the second property.
Notice that in the summation in equation~\eqref{e:sum equivalence diffeo} does not involve $\av_{N0}$
and the average in equation~\eqref{e:djds_adjoint_diffeo} does not involve $\av_0$.
However, we still start the sequence $\av$ from $\av_0$,
not only because adding $ \ip{\av_{0}, f_{s,-1}}$ to the average in equation~\eqref{e:djds_adjoint_diffeo} does not change its limit,
but also because we can impose the second property neatly by using $\av_0$, which is essential for uniqueness of adjoint shadowing directions.

Then we show $\av$ is bounded.
Due to hyperbolic assumption,
\begin{equation} \begin{split}
  \left\| \ad_i^{l} \ap_{i}^-J_{ui} \right\| &\le C e^{-\lambda|l-i|} \left \|\ap_{i}^-J_{ui} \right\|
  \le C C_\alpha e^{-\lambda|l-i|} \left \|J_{ui} \right\|
  \,, \quad \textnormal{ when } i>l; \\
  \left\| \ad^{l}_i \ap_{i}^+J_{ui} \right\| &\le C e^{-\lambda|l-i|} \left \|\ap_{i}^+J_{ui} \right\| 
  \le C C_\alpha e^{-\lambda|l-i|} \left \|J_{ui} \right\|
  \,, \quad \textnormal{ when } i<l\,.
\end{split} \end{equation}
Here $C, \lambda$ are from the definition of uniform hyperbolicity, $C_\alpha$ is from an analogy of lemma~\ref{l:bound for ap}.
Using that a continuous $J_u$ is bounded on a compact attractor, we have
\begin{equation}
  \| \av_l \| \le  C C_\alpha \left \| J_u \right \|_{L^\infty}
  \left( \sum_{i=l}^{\infty} e^{-\lambda|l-i|} +  \sum_{i=0}^{l-1} e^{-\lambda|l-i|} \right) 
  = \frac{ 2 C C_\alpha \left \| J_u \right \|_{L^\infty}} {1- e^{-\lambda}}
  \,.
\end{equation}

Finally, we show uniqueness.
Under the constraint of the first property, our candidate sequence, say $\av'$, is determined once we give the initial condition at $0$-th step.
The second property tells us what initial condition we should have on the unstable subspace.
Now we have only freedom to decide the initial condition on the stable subspace.
Assume $\av'_0$ differs from $\av_0$ on the stable adjoint subspace, this difference would grow exponentially as we step forward.
In order for both $\av$ and $\av'$ to have the boundedness required by the third property,
$\av$ and $\av'$ must be identical on the stable subspace.

\appendix

\section{Properties of the tangent flow} \label{app:properties_of_tangent_flow}
This appendix proves properties of the tangent flow and its corresponding projection operators.

\begin{lemma} \label{l:matrix_form_of_P}
  $P(t)=Z(t)DZ(t)^{-1}$, where $D$ is a constant diagonal matrix with entries being one or zero, 
  and $Z(t)$ is the square matrix whose column vectors are CLVs at time $t$: $Z(t) = [\zeta_1(t),\cdots, \zeta_n(t)]$. 
  Here the projection operator $P$ can be either $P^j, P^+, P^-, P^0$, or $P^\pm$, 
  and the corresponding diagonal matrices are denoted by $D^j, D^+, D^-, D^0$, or $D^\pm$, respectively.
\end{lemma}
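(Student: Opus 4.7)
The plan is to prove this directly from the definition of the projection operators, treating it as a basis-change computation in linear algebra. The hyperbolicity hypothesis, combined with the continuity-of-CLV-directions result quoted just before definition~\ref{def:tangent P}, guarantees that at every time $t$ the CLVs $\zeta_1(t),\dots,\zeta_m(t)$ form a basis of $\R^m$ (they are linearly independent because the angle between any one of them and the span of the others is bounded below by $\alpha>0$). Consequently the matrix $Z(t)=[\zeta_1(t),\dots,\zeta_m(t)]$ is invertible for every $t$.

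My first step is to handle $P^j$. Given any $v\in\R^m$, writing $a=Z(t)^{-1}v$ expresses $v=\sum_k a_k\zeta_k(t)$, so $a_j$ is exactly the $j$-th coordinate appearing in definition~\ref{def:tangent P}. Let $D^j$ denote the $m\times m$ diagonal matrix with a $1$ in the $(j,j)$ entry and zeros elsewhere. Then $D^j a$ has all coordinates zero except possibly the $j$-th, which is $a_j$, so $Z(t)D^j a=a_j\zeta_j(t)=P^j(t)v$. Since $v$ was arbitrary and $a=Z(t)^{-1}v$, this gives $P^j(t)=Z(t)D^j Z(t)^{-1}$, with $D^j$ a constant diagonal $0/1$ matrix.

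Second, I extend this to the composite projections. Because $Z(t)\,\cdot\,Z(t)^{-1}$ is linear in the middle factor, summation commutes with conjugation by $Z(t)$:
\begin{equation}
\sum_{j\in S} P^j(t) = Z(t)\Bigl(\sum_{j\in S} D^j\Bigr)Z(t)^{-1}.
\end{equation}
Taking $S=\{1,\dots,\mus\}$, $S=\{\mus+2,\dots,m\}$, $S=\{\mus+1\}$, and $S=\{1,\dots,m\}\setminus\{\mus+1\}$ gives $P^+,P^-,P^0,P^\pm$ respectively, with associated diagonal matrices $D^+,D^-,D^0,D^\pm$, each of which is constant in $t$ and has $0/1$ entries.

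There is essentially no difficulty here beyond bookkeeping; the one subtlety I would flag is the invertibility of $Z(t)$, which must be quoted from the uniform angle bound $\alpha>0$ between each CLV and the span of the remaining CLVs (a standard consequence of uniform hyperbolicity and continuity of CLV directions on a compact attractor). Everything else is immediate from the definitions.
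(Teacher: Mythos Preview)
Your proof is correct and follows essentially the same approach as the paper: write $a=Z(t)^{-1}v$, define $D^j$ as the diagonal matrix with a single $1$ in position $(j,j)$, verify $Z(t)D^jZ(t)^{-1}v=a_j\zeta_j(t)=P^j(t)v$, and then obtain the remaining projections by summing the appropriate $D^j$'s. You are slightly more explicit than the paper in spelling out the index sets for $P^+,P^-,P^0,P^\pm$ and in justifying the invertibility of $Z(t)$ via the uniform angle bound, but the argument is the same.
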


\begin{proof}
  We first prove the case for $P^j$.
  For any vector $v\in \R^m$, at time $t$, we decompose $v$ onto basis $\{\zeta_j(t)\}_{j=1}^m$.
  This decomposition can be written in matrix form:
  \begin{equation}
    Z(t) \,a =  [\zeta_1(t),\cdots, \zeta_n(t)]\,a = v \,,
  \end{equation}
  where $a = [a_1,\cdots,a_m]^T$ is the coordinate.
  Since the CLVs form a basis of $\R^m$, $Z(t)$ is invertible.
  We define $D^j$ as the matrix whose only non-zero entry is the $j$-th entry on the diagonal, and the value is 1.
  Now using that $P^j(t)v = a_j\zeta_j(t)$ by definition,
  \begin{equation}
    Z(t)D^jZ(t)^{-1}v = Z(t)D^j\,a = Z(t) [0,\cdots,0,a_j,0,\cdots,0]^T=a_j\zeta_j(t) = P^j(t)v\,.
  \end{equation}
  Above equivalence holds for any $v$, hence $P^j(t)=Z(t)D^jZ(t)^{-1}$.
  The diagonal matrices for other projection operators are formed by summing corresponding $D^j$'s.
\end{proof}

\begin{lemma} \label{l:DP=PD}
  $\dtaut P(\tau) = P(t) \dtaut$, where the projection operator $P$ can be either $P^j, P^+$, $P^-$, $P^0$, or $P^\pm$.
\end{lemma}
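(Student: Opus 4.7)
The plan is to leverage the matrix form of the projection operators established in Lemma~\ref{l:matrix_form_of_P} together with the key observation that the tangent flow operator maps CLVs at one time to the same CLVs at another time. This is a direct consequence of the fact that each CLV $\zeta_j(t)$ is, by definition, a homogeneous tangent solution, so $D^t_\tau \zeta_j(\tau) = \zeta_j(t)$.

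First I would assemble these CLV identities into a matrix statement. Writing $Z(t) = [\zeta_1(t), \ldots, \zeta_m(t)]$ and applying $\dtaut$ column by column gives $\dtaut Z(\tau) = Z(t)$. Since the CLVs form a basis of $\R^m$ (by uniform hyperbolicity and the fact that the angle between any CLV and the span of the rest is bounded below by $\alpha > 0$), the matrix $Z(\tau)$ is invertible, so one obtains the compact representation $\dtaut = Z(t) Z(\tau)^{-1}$ for the tangent flow operator itself.

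The rest is a direct computation. By Lemma~\ref{l:matrix_form_of_P}, for any of the projection operators $P^j, P^+, P^-, P^0, P^\pm$ one has $P(\tau) = Z(\tau) D Z(\tau)^{-1}$ and $P(t) = Z(t) D Z(t)^{-1}$ with the \emph{same} constant diagonal matrix $D$. Then
\begin{equation}
\dtaut P(\tau) = Z(t) Z(\tau)^{-1} \cdot Z(\tau) D Z(\tau)^{-1} = Z(t) D Z(\tau)^{-1},
\end{equation}
and
\begin{equation}
P(t) \dtaut = Z(t) D Z(t)^{-1} \cdot Z(t) Z(\tau)^{-1} = Z(t) D Z(\tau)^{-1},
\end{equation}
so the two sides agree. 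Since the argument uses only that $D$ is time-independent and common to the two expressions, the identity holds uniformly for all five projection operators at once.

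There is no real obstacle in this argument; the only subtle point is justifying $\dtaut Z(\tau) = Z(t)$, which rests on interpreting $\dtaut$ columnwise on CLVs and using linearity of the homogeneous tangent equation. Conceptually, one could also give a basis-free proof by fixing $v \in \R^m$, expanding $v = \sum_j a_j \zeta_j(\tau)$, noting that $\dtaut v = \sum_j a_j \zeta_j(t)$, and comparing $P^j(\tau) v = a_j \zeta_j(\tau)$ propagated forward against $P^j(t) \dtaut v = a_j \zeta_j(t)$; summing over the appropriate index sets covers $P^+, P^-, P^0, P^\pm$. I would include the matrix-form proof as primary since it handles all five cases in one calculation.
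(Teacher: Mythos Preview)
Your proof is correct and rests on the same key fact as the paper's: each CLV is a homogeneous tangent solution, so $\dtaut \zeta_j(\tau) = \zeta_j(t)$. The paper argues coordinatewise---expanding $v = \sum_j a_j \zeta_j(\tau)$, applying $\dtaut$ and $P^j$ in both orders, and comparing---which is exactly the ``basis-free'' alternative you sketch at the end. Your primary argument repackages this into the matrix identity $\dtaut = Z(t)Z(\tau)^{-1}$ and then appeals to Lemma~\ref{l:matrix_form_of_P}; this is a mild cosmetic difference (it handles all five projections in one line rather than reducing to $P^j$ first) but not a genuinely different route.
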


\begin{proof}
  We first prove the case for $P^j$.
  For any $v\in\R^m$, at $\tau$, we decompose it onto basis $\{\zeta_j(t)\}_{j=1}^m$. 
  By definition, $P^j(\tau)v = a_j \zeta_j(\tau)$. Since $\zeta_j$ is a homogeneous tangent solution, 
  \begin{equation}
    \dtaut P^j(\tau)v =  a_j \dtaut \zeta_j(\tau) = a_j \zeta_j(t) \,.
  \end{equation}
  On the other hand, 
  \begin{equation}
    P^j(t) \dtaut v 
    = P^j(t) \sum_{k=1}^{n} a_k \dtaut  \zeta_k (\tau)  
    = P^j(t) \sum_{k=1}^{n} a_k \zeta_k (t)
    = a_j\zeta_j(t) \,.
  \end{equation}
  Both equivalences holds for any $v$, hence $\dtaut P^j(\tau) = P^j(t) \dtaut$.
  Other cases follow from the fact that other projection operators are summations of several $P^j$.
\end{proof}

\begin{lemma}\label{l:Ca}
  Assume we can find $\alpha>0$ such that at any time $t$, 
  the angle between any $\zeta_j(t)$ and the span of the rest CLVs, 
  $span\left\{\{\zeta_k(t)\}_{k\ne j} \right\}$, is larger than $\alpha$.
  Then there exists $C_\alpha$ such that for any $v\in R^m$, at any $t$, 
  and for all tangent projection operators $P$ (either $P^j, P^+, P^-, P^0$, or $P^\pm$),
  we have $\|P(t)v\|\le C_\alpha\|v\|$.
\end{lemma}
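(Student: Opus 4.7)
The plan is to first establish the bound for each single-CLV projector $P^j(t)$ using the angle hypothesis, and then pass to the composite projectors $P^+, P^-, P^0, P^\pm$ via the triangle inequality, since each of these is a sum of at most $m$ of the $P^j$'s.

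For the single-CLV bound, I fix $t$ and an index $j$, and set $U_j := \mathrm{span}\{\zeta_k(t) : k \neq j\}$. For any $v \in \R^m$, expand $v = a_j \zeta_j(t) + w$ with $w \in U_j$, so $P^j(t) v = a_j \zeta_j(t)$. I decompose $\zeta_j(t) = \zeta_j^\perp + \zeta_j^\parallel$ into its orthogonal complement in $U_j^\perp$ and its component in $U_j$. The key geometric fact is that the orthogonal projection of $v$ onto $U_j^\perp$ equals $a_j \zeta_j^\perp$, since $w$ and $\zeta_j^\parallel$ both lie in $U_j$. Because orthogonal projection is a contraction in the Euclidean norm, $|a_j| \, \|\zeta_j^\perp\| \le \|v\|$. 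By the standard definition of the angle $\theta_j(t) := \angle(\zeta_j(t), U_j)$ between a vector and a subspace, one has $\|\zeta_j^\perp\| = \|\zeta_j(t)\| \sin \theta_j(t)$, and the hypothesis gives $\sin \theta_j(t) \ge \sin \alpha$. Combining these yields
\begin{equation*}
  \|P^j(t) v\| = |a_j| \, \|\zeta_j(t)\| = \frac{|a_j| \, \|\zeta_j^\perp\|}{\sin \theta_j(t)} \le \frac{\|v\|}{\sin \alpha}.
\end{equation*}

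For the composite projectors, I use triangle inequality: e.g.\ $\|P^+(t) v\| \le \sum_{j=1}^{\mus} \|P^j(t) v\| \le \mus \|v\| / \sin \alpha$, and similarly $\|P^-(t) v\|$, $\|P^0(t) v\|$, $\|P^\pm(t) v\|$ are bounded by $(m / \sin\alpha) \|v\|$. Taking $C_\alpha := m / \sin \alpha$ gives a single constant valid for all five projection operators, simultaneously for every $t$ on the attractor, because the angle bound $\alpha$ is uniform in $t$ by hypothesis.

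There is no substantial obstacle here; the only non-routine ingredient is the geometric identity relating $\|\zeta_j^\perp\|$ to $\sin \theta_j(t)$, which is simply the defining property of the angle between a vector and a subspace (namely, the complement of the angle between the vector and its orthogonal projection onto the subspace). Everything else reduces to linearity of projection and the triangle inequality.
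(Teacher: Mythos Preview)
Your proof is correct, but it takes a somewhat different route from the paper's. For the single-CLV projectors $P^j$, both arguments amount to the same geometric fact: the paper frames it via the law of cosines on the triangle $(v, Pv, v-Pv)$, while you frame it via orthogonal projection of $v$ onto $U_j^\perp$; in either case the bound $\|P^j v\| \le \|v\|/\sin\alpha$ drops out. The real divergence is in how the composite projectors $P^+, P^-, P^0, P^\pm$ are handled. The paper applies the \emph{same} law-of-cosines argument directly to each composite $P$, asserting that the angle between $Pv$ and $(I-P)v$ is still at least $\alpha$; this yields the uniform constant $C_\alpha = 1/\sin\alpha$ with no dependence on $m$. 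You instead sum the individual $P^j$ bounds via the triangle inequality, which is completely safe but costs a factor of $m$, giving $C_\alpha = m/\sin\alpha$. Your approach has the virtue of using only the literal hypothesis (angles between a \emph{single} CLV and the span of the rest), whereas the paper's argument for composite $P$ tacitly relies on the stronger statement that the angle between \emph{any} pair of complementary CLV-spanned subspaces is at least $\alpha$. Either way the lemma follows; the paper trades a slightly stronger implicit assumption for a sharper constant, while your version is more self-contained at the cost of the factor $m$.
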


\begin{proof}
  For any projection $P$,
  as shown in figure~\ref{f:tangent project}, we denote $v_2:= v - P v$ and $\phi$ the angle between $P v$ and $v_2$.
  By our assumption, $\alpha$ lower-bounds the angle between any two subspaces spanned by different sets of CLVs,
  hence $\alpha \le \phi \le \pi-\alpha$, which further indicates $|\cos \phi| \le \cos \alpha$.
  By the law of cosine, we have
  \begin{equation}\begin{split}
    \|v\|^2 &= \|P v\|^2 + \|v_2\|^2 - 2 \|P v\|\|v_2\| \cos(\pi-\phi) \\
            &= (1-\cos^2 \phi)\|P v\|^2 + (\cos\phi \|P v\| + \|v_2\|)^2 \\
            &\ge (1-\cos^2 \alpha)\|P v\|^2 \,.
  \end{split}\end{equation}
  Define $C_\alpha = (1-\cos^2 \alpha)^{-1/2}$, we have $\|Pv\| \le C_\alpha \|v\|$.

  \begin{figure} \centering
    \includegraphics[width=0.45\textwidth]{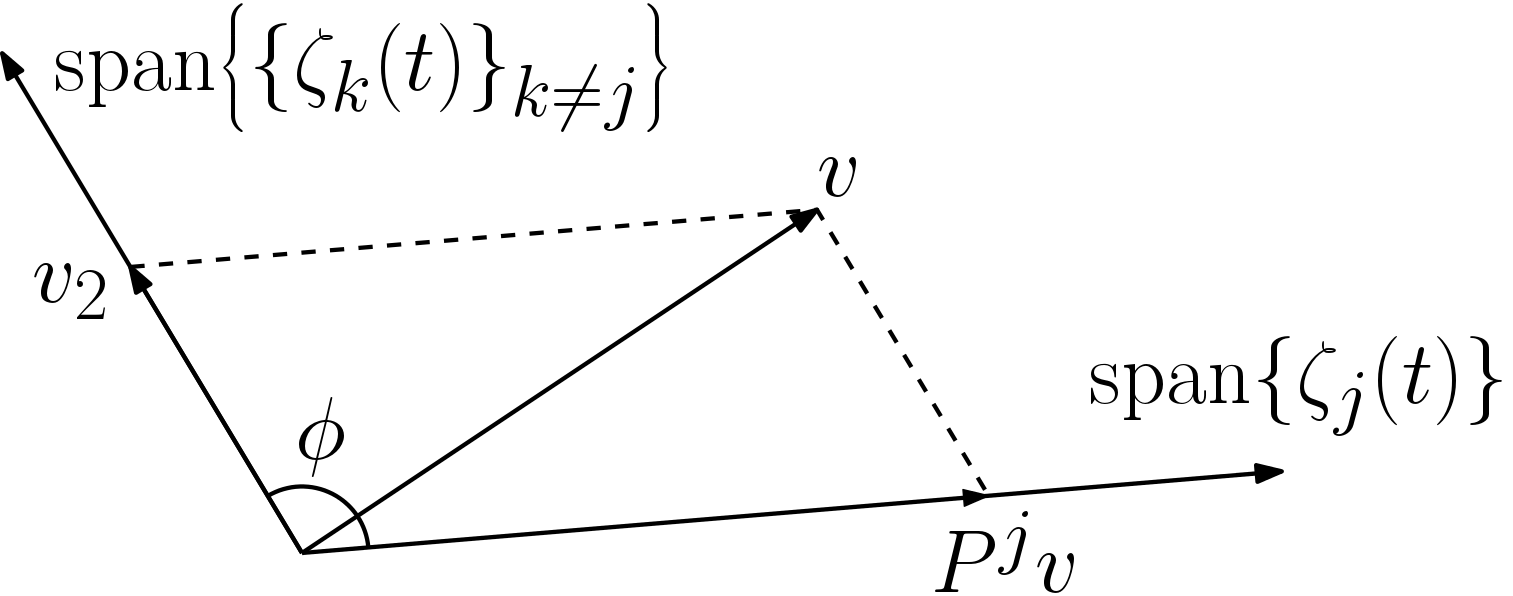}
    \caption{Tangent projection operators.}
    \label{f:tangent project}
  \end{figure}
\end{proof}

\section{Properties of the adjoint flow} \label{app:properties_of_adjoint_flow}
This appendix proves properties of the adjoint flow and its corresponding projection operators.

\begin{lemma}\label{l:adjoint_pair_operators_products}
  For any $t_1, t_2\in \R$, any $w_1,\aw_2 \in \R^m$, $\ip{\overline{D}^{t_1}_{t_2}\aw_2,w_1} = \ip{\aw_2, D^{t_2}_{t_1}w_1}$.
  In other words, for any homogeneous tangent solution $w(t)$ and homogeneous adjoint solution $\aw(t)$, $\ip{\aw(t),w(t)}$ is a constant.
\end{lemma}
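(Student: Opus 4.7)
The plan is to prove the two formulations simultaneously by showing that, along any pair of homogeneous tangent/adjoint solutions, the Euclidean inner-product is conserved in time; then the operator identity drops out by choosing suitable endpoints.

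First I would set $w(t) := D^t_{t_1} w_1$, so that $w$ is the homogeneous tangent solution with $w(t_1)=w_1$, and set $\aw(t) := \overline{D}^t_{t_2}\aw_2$, so that $\aw$ is the homogeneous adjoint solution with $\aw(t_2)=\aw_2$. These are well-defined on all of $\mathbb{R}$ by the definitions of the tangent and adjoint flow operators.

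Next I would differentiate $\ip{\aw(t),w(t)}$ with respect to $t$. Using the homogeneous tangent equation~\eqref{e:homogeneous tangent} for $w$ and the homogeneous adjoint equation~\eqref{e:homogeneous adjoint} for $\aw$, together with the fact that for any vectors $a,b$ and any matrix $M$ we have $\ip{M^T a, b} = \ip{a, Mb}$, the computation is
\begin{equation}
\dd{}{t}\ip{\aw(t),w(t)} = \ip{-f_u^T \aw, w} + \ip{\aw, f_u w} = -\ip{\aw, f_u w} + \ip{\aw, f_u w} = 0.
\end{equation}
Hence $\ip{\aw(t),w(t)}$ is constant, which is exactly the second (trajectory-level) statement in the lemma.

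Finally I would evaluate this constant at $t=t_1$ and at $t=t_2$ to obtain the operator identity:
\begin{equation}
\ip{\overline{D}^{t_1}_{t_2}\aw_2, w_1} = \ip{\aw(t_1), w(t_1)} = \ip{\aw(t_2), w(t_2)} = \ip{\aw_2, D^{t_2}_{t_1} w_1}.
\end{equation}
Since $w_1, \aw_2, t_1, t_2$ were arbitrary, this gives both assertions. There is no real obstacle here; the only subtlety is the sign bookkeeping, which works out precisely because the adjoint equation carries $+f_u^T$ while the tangent equation carries $-f_u$, making the two terms cancel rather than add.
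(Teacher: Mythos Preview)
Your proof is correct and is essentially identical to the paper's own argument: both define the homogeneous tangent and adjoint solutions through the given data, differentiate $\ip{\aw(t),w(t)}$, and use the sign cancellation between $-f_u^T$ and $f_u$ to conclude constancy, then read off the operator identity at the endpoints. The only cosmetic difference is that the paper adds a ``without loss of generality $t_1<t_2$'' clause, which your version (correctly) observes is unnecessary.
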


\begin{proof}
  Without loss of generality, we assume $t_1<t_2$.
  On time span $[t_1,t_2]$, we solve the homogeneous tangent $w(t)$ with initial condition $w(t_1) = w_1$, 
  and the homogeneous adjoint $\aw(t)$ with terminal condition $\aw(t_2) = \aw_2$.
  The equality we want to prove can now be written as $\ip{\aw(t_1),w(t_1)} = \ip{\aw(t_2), w(t_2)}$.
  To see this, we compute the time derivative:
  \begin{equation} \begin{split}
    \frac{d}{dt} \ip{\aw(t),w(t)} 
    &= \ip{\dd{\aw(t)}t,w(t)} +\ip{\aw(t),\dd{w(t)}t} \\
    &= \ip{-f_u^T \aw(t),w(t)} +\ip{\aw(t),f_u w(t)} = 0 \,.
  \end{split} \end{equation}
  Hence $\ip{\aw(t),w(t)}$ is a constant.
\end{proof}

\begin{lemma} \label{l:aP_P_orthogonal}
  At any time $t$, for any $i\ne j$, and any $v,w\in \R^m$, $\ip{\ap^i(t) w, P^j(t) v} = 0$.
\end{lemma}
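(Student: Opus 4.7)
The plan is to prove the identity by writing both operators in coordinates relative to the CLV basis and showing that their composition factors through a zero diagonal matrix. This is essentially a two-line matrix computation once the representations of $P^j$ and $\ap^i$ are in hand.

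First, I would invoke Lemma~\ref{l:matrix_form_of_P} to write $P^j(t) = Z(t) D^j Z(t)^{-1}$, where $Z(t) = [\zeta_1(t),\dots,\zeta_m(t)]$ and $D^j$ is the diagonal matrix whose only nonzero entry is a $1$ in position $j$ (or, more generally for the other projection operators in the lemma, a $0/1$ diagonal matrix whose support is the corresponding set of CLV indices). By Definition~\ref{d:adjoint projection}, $\ap^i(t) = P^i(t)^T = Z(t)^{-T} D^i Z(t)^T$.

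Next, I would rewrite the Euclidean inner product using transposes. Since $\langle a,b\rangle = a^T b$, we have
\begin{equation}
  \langle \ap^i(t) w,\; P^j(t) v \rangle
  = (\ap^i(t) w)^T (P^j(t) v)
  = w^T \bigl(\ap^i(t)\bigr)^T P^j(t) v
  = w^T P^i(t) P^j(t) v \,.
\end{equation}
Thus it suffices to prove that $P^i(t) P^j(t) = 0$ whenever $i\ne j$.

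Finally, I would combine the two matrix expressions:
\begin{equation}
  P^i(t) P^j(t) = Z(t) D^i Z(t)^{-1} Z(t) D^j Z(t)^{-1} = Z(t) \, D^i D^j \, Z(t)^{-1} \,.
\end{equation}
Since the diagonal matrices $D^i$ and $D^j$ have disjoint supports for $i\ne j$, their product $D^i D^j$ is the zero matrix. Hence $P^i(t)P^j(t)=0$, which gives $\langle \ap^i(t)w,P^j(t)v\rangle = 0$, as desired.

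There is no real obstacle here; the only thing to be careful about is keeping the transposes straight when moving $\ap^i = (P^i)^T$ across the inner product, and verifying that the relevant $D^i, D^j$ pairs truly have disjoint supports (which holds for any two of $P^1,\dots,P^m$ with distinct indices, and also for the composite operators $P^+,P^-,P^0,P^\pm$ whenever they arise from disjoint index sets, so the same argument covers the applications in the sequel).
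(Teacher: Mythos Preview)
Your proposal is correct and follows essentially the same approach as the paper: both reduce the inner product to $w^T P^i(t) P^j(t) v$ and then use the matrix representation $P^k = Z D^k Z^{-1}$ from Lemma~\ref{l:matrix_form_of_P} to conclude $P^i P^j = Z D^i D^j Z^{-1} = 0$. The only cosmetic difference is that the paper cites Lemma~\ref{l:adjoint_pair_operators_products} to move $\ap^i$ across the inner product, whereas you directly invoke $\ap^i = (P^i)^T$ and the transpose property, which is arguably more to the point.
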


\begin{proof}
  By lemma \ref{l:adjoint_pair_operators_products},
  \begin{equation}
    \ip{\ap^i(t) w, P^j(t) v} = \ip{ w, P^i(t) P^j(t) v} 
  \end{equation}
  Using lemma \ref{l:matrix_form_of_P} to write $P^i$ and $P^j$ in matrix form, we have 
  \begin{equation} 
    \ip{\ap^i(t) w, P^j(t) v} 
    = \ip{ w,  Z(t)D^i D^jZ(t)^{-1} v}  =0 \,,
  \end{equation}
  where we used $D^i D^j=0$, since $D^i$ and $D^j$ are two diagonal matrices with non-zero entries at different locations.
\end{proof}

\begin{lemma} \label{l:aDP=aPD}
  For any $t_1, t_2\in\R$, $\adto \ap(t_2) = \ap(t_1)\adto$. 
  The adjoint projection operator $\ap$ can be either $\ap^j, \ap^+, \ap^-, \ap^0$, or $\ap^\pm$.
\end{lemma}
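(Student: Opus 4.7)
The plan is to derive this identity as the matrix transpose of the already-established tangent commutation lemma~\ref{l:DP=PD}. Written with the time indices aligned to the current lemma, that earlier result states $D_{t_1}^{t_2} P(t_1) = P(t_2) D_{t_1}^{t_2}$ for every tangent projection operator $P \in \{P^j, P^+, P^-, P^0, P^\pm\}$.

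First I would record the two algebraic facts that let me transpose both sides cleanly. By definition~\ref{d:adjoint projection}, $\ap(t) = P(t)^T$. And lemma~\ref{l:adjoint_pair_operators_products} asserts $\ip{\adto \aw_2, w_1} = \ip{\aw_2, D_{t_1}^{t_2} w_1}$ for all $w_1, \aw_2 \in \R^m$, which by bilinearity of the Euclidean inner product is exactly the statement $\adto = (D_{t_1}^{t_2})^T$ as matrices on $\R^m$.

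Then taking the transpose of $D_{t_1}^{t_2} P(t_1) = P(t_2) D_{t_1}^{t_2}$ reverses the order of the factors and yields $P(t_1)^T (D_{t_1}^{t_2})^T = (D_{t_1}^{t_2})^T P(t_2)^T$, i.e., $\ap(t_1) \adto = \adto \ap(t_2)$, which is the claimed identity. Because lemma~\ref{l:DP=PD} already covers all five tangent projection types, this single transpose argument simultaneously handles $\ap^j, \ap^+, \ap^-, \ap^0$, and $\ap^\pm$ without case work.

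There is no substantive obstacle here; the only subtlety worth flagging is that one should be explicit that lemma~\ref{l:adjoint_pair_operators_products}, phrased as an inner-product identity, genuinely pins down $\adto$ as the transpose of $D_{t_1}^{t_2}$. This follows because $\ip{A\aw_2, w_1} = \ip{\aw_2, B w_1}$ for all $w_1, \aw_2$ forces $A = B^T$. An alternative route, should one wish to avoid transposes altogether, would be to evaluate both sides of the claimed identity against an arbitrary homogeneous tangent solution $w$ and use invariance of $\ip{\aw, w}$ along tangent-adjoint pairs (again from lemma~\ref{l:adjoint_pair_operators_products}) together with lemma~\ref{l:DP=PD} applied to $w$; but the one-line transpose derivation is the cleanest presentation.
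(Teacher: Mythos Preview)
Your proposal is correct and is essentially the same argument as the paper's: both derive the identity from lemma~\ref{l:DP=PD} via lemma~\ref{l:adjoint_pair_operators_products} and the definition $\ap = P^T$. The only cosmetic difference is that the paper phrases the transpose step through an inner-product computation $\ip{\adto \ap(t_2)w, v} = \ip{w, P(t_2)\dtot v} = \ip{w, \dtot P(t_1) v} = \ip{\ap(t_1)\adto w, v}$ for arbitrary $w,v$, whereas you invoke the matrix transpose directly; these are the same thing.
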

\begin{proof}
  For any $w, v\in\R^m$, 
  using lemma \ref{l:DP=PD}, lemma \ref{l:adjoint_pair_operators_products}, and the definition of $\ap$, we have
  \begin{equation}
    \ip{\adto \ap(t_2)w, v} = \ip{w, P(t_2)\dtot v} = \ip{w, \dtot P(t_1) v}  = \ip{\ap(t_1)\adto w, v}  \,.
  \end{equation}
  Hence $\ip{\left(\adto \ap(t_2) - \ap(t_1)\adto \right) w, v} = 0$ for any $w, v\in\R^m$.
  This implies the lemma. 
\end{proof}

\begin{lemma}\label{l:bound for ap}
  Under the same assumptions and for the same $C_\alpha$ of lemma~\ref{l:Ca}, 
  we have the same conclusion for adjoint projection operators.
  Than is, for any $v\in R^m$, at any $t$, 
  and for all adjoint projection operators $\ap$ (either $\ap^j, \ap^+, \ap^-, \ap^0$, or $\ap^\pm$),
  we have $\|\ap(t) v\|\le C_\alpha\|v\|$.
\end{lemma}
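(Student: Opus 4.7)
The plan is to reduce this lemma to Lemma~\ref{l:Ca} by exploiting the identity $\ap(t)=P(t)^T$ from Definition~\ref{d:adjoint projection}, together with the standard fact that a real square matrix and its transpose share the same spectral norm. Since Lemma~\ref{l:Ca} is exactly the statement that the operator-$2$ norm of $P(t)$ satisfies $\|P(t)\|_{op}\le C_\alpha$, transposing should immediately yield the corresponding bound on $\ap(t)$.

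First I would restate Lemma~\ref{l:Ca} in operator-norm form: for each of the five projections $P\in\{P^j,P^+,P^-,P^0,P^\pm\}$ and for each $t$, the bound $\|P(t)v\|\le C_\alpha\|v\|$ valid for every $v\in\R^m$ is equivalent to $\|P(t)\|_{op}\le C_\alpha$, where $\|\cdot\|_{op}$ denotes the norm induced by the Euclidean $\|\cdot\|$ on $\R^m$. Next I would invoke the identity $\|A\|_{op}=\|A^T\|_{op}$ for any real square matrix $A$; this follows because $A^TA$ and $AA^T$ share the same nonzero spectrum, so $A$ and $A^T$ have identical singular values and hence identical spectral norms. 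Applying this with $A=P(t)$ and using Definition~\ref{d:adjoint projection} gives $\|\ap(t)\|_{op}=\|P(t)^T\|_{op}=\|P(t)\|_{op}\le C_\alpha$, uniformly in $t$ since the $C_\alpha$ produced by Lemma~\ref{l:Ca} depends only on the angle lower bound $\alpha$ guaranteed on the compact attractor, not on $t$. Evaluating on an arbitrary $v\in\R^m$ then yields $\|\ap(t)v\|\le C_\alpha\|v\|$, which is the claim. The argument is uniform across the five cases because the corresponding five bounds on $P$ were established uniformly in Lemma~\ref{l:Ca}.

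There is essentially no obstacle here — the lemma is a one-line corollary of Lemma~\ref{l:Ca} once the transpose-invariance of the spectral norm is cited. The only point worth spelling out explicitly is that the norm $\|\cdot\|$ used throughout the paper is the Euclidean norm on $\R^m$, so the induced operator norm coincides with the matrix $2$-norm, for which the transpose identity holds. A purely geometric alternative — decomposing $v$ along the adjoint CLVs and using Lemma~\ref{l:aP_P_orthogonal} to control cross-terms, in analogy with the law-of-cosines argument in Lemma~\ref{l:Ca} — is available but strictly longer and gives the same constant, so I would not pursue it.
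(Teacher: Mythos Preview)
Your proposal is correct and takes essentially the same approach as the paper. The paper's proof writes $|\ip{w,\ap v}|=|\ip{Pw,v}|\le C_\alpha\|w\|\|v\|$ and then sets $w=\ap v$, which is precisely an inline duality proof of the identity $\|P^T\|_{op}=\|P\|_{op}$ that you cite directly; the content and the resulting constant are identical.
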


\begin{proof}
  For any $v, w\in \R^m$, we have
  \begin{equation} \begin{split}
    |\ip{w, \ap v}| = |\ip{Pw, v}| \le \|Pw\| \|v\| \le C_\alpha \|w\| \|v\|\,,
  \end{split} \end{equation}
  where we used lemma~\ref{l:Ca}.
  Now let $w =  \ap v$, we have
  \begin{equation} \begin{split}
    \| \ap v \|^2 \le C_\alpha \|\ap v\| \|v\|\,.
  \end{split} \end{equation}
  The lemma is proved by canceling $\|\ap v\|$ on each side.
\end{proof}

\begin{proposition} \label{l:ap_projects_to_adjoint_CLV}
  For any $j$, there is a constant $\ac _j$ such that, for any $t_1,t_2\in \R$, $w_2\in \R^m$,
  $\| \adto \ap ^j(t_2) w_2\| \le \ac _j e^{\lambda_j(t_2-t_1)} \|\ap ^j(t_2) w_2\|$.
  In other words, $\ap^j$ projects to the $j$-th adjoint CLV.
\end{proposition}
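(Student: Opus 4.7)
The plan is to view $\az(t):=\ad^{t}_{t_2}\ap^{j}(t_2)w_2$ as a homogeneous adjoint solution, recognise that it lies in a one-dimensional subspace at every time, and then use the tangent--adjoint duality to transfer the exponential growth rate of the tangent CLV $\zeta_j$ onto $\|\az(t)\|$.

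First I would identify $\text{image}(\ap^j(t))$. From Lemma~\ref{l:matrix_form_of_P} one has $P^j(t)=Z(t)D^j Z(t)^{-1}$, hence $\ap^j(t)=Z(t)^{-T}D^j Z(t)^{T}$, whose image is one-dimensional and equals $\text{span}\{\zeta_k(t):k\ne j\}^{\perp}$ (it is spanned by the column dual to $\zeta_j(t)$). Applying Lemma~\ref{l:aDP=aPD} gives $\az(t)=\ap^{j}(t)\,\ad^{t}_{t_2}w_2$, so $\az(t)$ stays in this one-dimensional subspace at every $t$.

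Next, by Lemma~\ref{l:adjoint_pair_operators_products} the pairing $K:=\ip{\az(t),\zeta_j(t)}$ is constant in $t$. The uniform angle hypothesis---that the angle between $\zeta_j(t)$ and $\text{span}\{\zeta_k(t):k\ne j\}$ exceeds $\alpha$---forces the angle between $\zeta_j(t)$ and any unit vector $\eta_j(t)\in\text{image}(\ap^j(t))$ to be at most $\pi/2-\alpha$, so $|\ip{\eta_j(t),\zeta_j(t)}|\ge \sin\alpha\cdot\|\zeta_j(t)\|$. Combined with Cauchy--Schwarz I obtain
\begin{equation}
\sin\alpha\cdot\|\az(t)\|\,\|\zeta_j(t)\|\;\le\;|K|\;\le\;\|\az(t)\|\,\|\zeta_j(t)\|\,.
\end{equation}
Applying the lower bound at $t_1$ and the upper bound at $t_2$ yields
\begin{equation}
\|\az(t_1)\|\;\le\;\frac{1}{\sin\alpha}\,\frac{\|\zeta_j(t_2)\|}{\|\zeta_j(t_1)\|}\,\|\az(t_2)\|\,,
\end{equation}
and the two-sided CLV bound derived in the preparations, $\|\zeta_j(t_2)\|/\|\zeta_j(t_1)\|\le C e^{\lambda_j(t_2-t_1)}$, closes the estimate with $\ac_j:=C/\sin\alpha$.

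The main obstacle is geometric rather than analytic: one must correctly identify the image of the transposed (oblique) projection $\ap^j(t)$ as the orthogonal complement of $\text{span}\{\zeta_k(t):k\ne j\}$, and then convert the continuity/angle hypothesis on the tangent CLVs into a \emph{uniform} (in $t$) lower bound on $|\ip{\eta_j(t),\zeta_j(t)}|/\|\zeta_j(t)\|$. Once that is secured, the rest is a one-line consequence of the conservation of $\ip{\az,\zeta_j}$.
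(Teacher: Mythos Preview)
Your argument is correct and genuinely different from the paper's. You work geometrically: identify $\mathrm{image}(\ap^j(t))=\mathrm{span}\{\zeta_k(t):k\ne j\}^\perp$, observe that $\az(t)$ stays in that one-dimensional line, and then exploit the conservation law $\ip{\az(t),\zeta_j(t)}=K$ together with the uniform angle bound to sandwich $\|\az(t)\|$ between constant multiples of $1/\|\zeta_j(t)\|$. The CLV growth estimate for $\zeta_j$ then transfers directly to $\az$.

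The paper instead argues algebraically. It starts from the duality $\ip{\adto\ap^j(t_2)w_2,\,v_1}=\ip{w_2,\,\dtot P^j(t_1)v_1}$, chooses $v_1=\adto\ap^j(t_2)w_2$, applies Cauchy--Schwarz on the right, bounds $\|\dtot P^j(t_1)v_1\|$ by $C_j e^{\lambda_j(t_2-t_1)}\|P^j(t_1)v_1\|\le C_j C_\alpha e^{\lambda_j(t_2-t_1)}\|v_1\|$, and cancels $\|v_1\|$; a final substitution $w_2\mapsto\ap^j(t_2)w_2$ plus idempotence of $\ap^j$ gives the stated inequality. The resulting constant $\ac_j=C_jC_\alpha$ coincides with your $C/\sin\alpha$, since $C_\alpha=(1-\cos^2\alpha)^{-1/2}=1/\sin\alpha$.

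What each route buys: your proof makes transparent \emph{why} the adjoint CLV has exponent $\lambda_j$---its norm is, up to a uniformly bounded factor, inversely proportional to $\|\zeta_j(t)\|$---and it uses only the scalar invariant $K$ rather than any operator norm estimate. The paper's proof is shorter and avoids explicitly identifying $\mathrm{image}(\ap^j)$, relying instead on the ready-made bound from Lemma~\ref{l:Ca}/\ref{l:bound for ap}; it also never needs the two-sided CLV estimate, only the one-sided one.
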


\begin{proof}
  For any $v_1\in R^m$, 
  \begin{equation}
    \ip{\adto \ap ^j(t_2) w_2, v_1} = \ip{w_2, \dtot P^j (t_1) v_1} \, .
  \end{equation}
  Letting $ v_1 = \adto \ap ^j(t_2) w_2$, applying Cauchy-Schwarz inequality,
  and recalling $\|D_{t_1}^{t_2} P^j(t_1)v\| \le C_j e^{\lambda_j(t_2-t_1)} \| P^j(t_1)v\|$ for some $C_j$, we have 
  \begin{equation} \begin{split} 
    \| \adto \ap ^j(t_2) w_2\|^2 =& \ip{w_2, \dtot P^j (t_1 )\adto \ap ^j(t_2) w_2} \\
    \le & \| w_2\| \, C_j e^{\lambda_j(t_2-t_1)}\|P^j (t_1 )\adto \ap ^j(t_2) w_2\| \\
    \le & \| w_2\| \, C_j e^{\lambda_j(t_2-t_1)} C_\alpha \| \adto  \ap ^j(t_2) w_2 \| \, ,
  \end{split} \end{equation}
  where $C_\alpha$ is the constant in lemma~\ref{l:bound for ap}.
  Cancel $\| \adto  \ap ^j(t_2) w_2 \|$ from both side of the inequality and set $\ac_j = C_j C_\alpha$, we get
  \begin{equation} \begin{split} 
    \| \adto \ap ^j(t_2) w_2\|
    \le & \ac_j e^{\lambda_j(t_2-t_1)}  \| w_2\| \,  \, .
  \end{split} \end{equation}
  This inequality holds for any $w_2$, in particular, we can pass $w_2$ to $\ap ^j(t_2) w_2$,
  \begin{equation} \begin{split} 
    \| \adto \ap ^j(t_2) \ap ^j(t_2) w_2\|
    \le & \, \ac_j e^{\lambda_j(t_2-t_1)}  \| \ap ^j(t_2) w_2\| \,  \, .
  \end{split} \end{equation}
  The lemma follows from the fact that $\ap^j$ is a projection operator, hence $\ap^j\ap^jw_2$ = $\ap^j w_2$.
\end{proof}

\begin{lemma} \label{l:ap0_from_ay}
  For any $t\in\R$, provided $\ay(t)$ and $f(t)$, then for any $v\in\R^m$, 
  \begin{equation}
    \ap^0(t) v  = \frac{\ip{v,f(t)}}{\ip{\ay(t), f(t)}} \ay(t) \,.
  \end{equation}
\end{lemma}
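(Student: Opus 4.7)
The plan is to compute $\ap^0(t)v$ as a scalar multiple of $\ay(t)$, then pin down the scalar by pairing with $f(t)$.

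First I would show that $\ap^0(t)v$ lies in the one-dimensional span of $\ay(t)$. By Lemma~\ref{l:ap_projects_to_adjoint_CLV}, $\ap^0(t)v$ is a neutral adjoint CLV (i.e.\ it generates a neutral adjoint solution along the trajectory). Under uniform hyperbolicity there is a unique zero Lyapunov exponent, so the neutral adjoint subspace at $t$ is one-dimensional and spanned by $\ay(t)$. Hence there is a scalar $c=c(t,v)$ with
\begin{equation}
  \ap^0(t)v = c\,\ay(t).
\end{equation}

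Next I would determine $c$ by taking the inner product with $f(t)$. On one hand
\begin{equation}
  \ip{\ap^0(t)v,\,f(t)} = c\,\ip{\ay(t),f(t)}.
\end{equation}
On the other hand, by Definition~\ref{d:adjoint projection} and Lemma~\ref{l:adjoint_pair_operators_products} applied instantaneously (the adjoint operator identity $\ip{\ap(t)v,w}=\ip{v,P(t)w}$ follows from $\ap=P^T$),
\begin{equation}
  \ip{\ap^0(t)v,\,f(t)} = \ip{v,\,P^0(t)f(t)} = \ip{v,f(t)},
\end{equation}
where the last equality uses that $f(t)$ is (a nonzero multiple of) the neutral tangent CLV, so $P^0(t)f(t)=f(t)$.

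The main subtlety is verifying $\ip{\ay(t),f(t)}\neq 0$, so that division is legitimate. By Lemma~\ref{l:aP_P_orthogonal}, the image of $\ap^0$ is orthogonal to the images of $P^+$ and $P^-$, i.e.\ $\ay(t)$ is orthogonal to the non-neutral tangent subspace $V^+(u)\oplus V^-(u)$. Under uniform hyperbolicity the CLV directions are uniformly bounded away from one another (there is an $\alpha>0$ with angles $\ge \alpha$ between any CLV and the span of the others), so $\ay(t)$ is not orthogonal to $f(t)$; in particular $\ip{\ay(t),f(t)}\neq 0$. Solving for $c$ then yields
\begin{equation}
  \ap^0(t)v = \frac{\ip{v,f(t)}}{\ip{\ay(t),f(t)}}\,\ay(t),
\end{equation}
as claimed. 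I expect step one---establishing that the neutral adjoint subspace is exactly $\mathrm{span}\{\ay\}$ so that $\ap^0 v$ must be a scalar multiple of $\ay$---to be the only place requiring care; the rest reduces to the adjoint identity and a direct pairing.
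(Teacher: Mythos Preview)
Your proof is correct and follows essentially the same approach as the paper: write $\ap^0(t)v$ as a scalar multiple of $\ay(t)$ and determine the scalar by pairing with $f(t)$. The only cosmetic difference is in the pairing step: the paper computes $\ip{(I-\ap^0)v,f}=\ip{\ap^\pm v,f}=0$ via Lemma~\ref{l:aP_P_orthogonal}, whereas you compute $\ip{\ap^0 v,f}=\ip{v,P^0 f}=\ip{v,f}$ directly from $\ap^0=(P^0)^T$ and $P^0 f=f$; these are two sides of the same identity. Your explicit verification that $\ip{\ay,f}\neq 0$ is a nice addition the paper omits here (it appears later in the uniqueness argument), though the cleanest phrasing is simply that $\ay\neq 0$ is orthogonal to $V^+\oplus V^-$ by Lemma~\ref{l:aP_P_orthogonal}, so it cannot also be orthogonal to $V^0=\mathrm{span}\{f\}$.
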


\begin{proof}
  For any $v\in \R^m$, $\ap^0 v$ is along the direction of the neutral adjoint CLV, $\ay$, so we can assume that
  $\ap^0 v = x \ay $,
  and our goal is to solve for the unknown coefficient $x$.
  On the other hand, by lemma \ref{l:aP_P_orthogonal}, the non-neutral adjoint subspace is perpendicular to $f$, so we have
  \begin{equation}
    \ip{v,f(t)} - x \ip{\ay(t), f(t)} = \ip{(I-\ap^0) v, f } = \ip{\ap ^\pm v, f } = 0 \,.
  \end{equation}
  Then we can solve for the unknown $x$ as given in the lemma.
\end{proof}

\bibliographystyle{siamplain}
\bibliography{MyCollection}

\end{document}